 \newtheorem{thm}{Theorem}[section]
 \newtheorem{lem}[thm]{Lemma}
 \newtheorem{prop}[thm]{Proposition}
 \theoremstyle{definition}
 \newtheorem{defn}[thm]{Definition}
 \theoremstyle{remark}
 \newtheorem{rem}[thm]{Remark}
 \theoremstyle{remark}
 \theoremstyle{definition}
 \newtheorem{notn}[thm]{Notation}
 \numberwithin{equation}{section}
 \newcommand{\Ver}{\mathrm{Ver}}
 \newcommand{\ord}{\mathrm{ord}}
 \newcommand{\GL}{\mathrm{GL}}
 \newcommand{\PGL}{\mathrm{PGL}}
 \newcommand{\Ed}{\mathrm{Ed}}
 \newcommand{\Tr}{\mathrm{Tr}}
 \newcommand{\Odd}{\mathrm{Odd}}
 \renewcommand{\mod}{\mathrm{mod}}
 \newcommand{\Nr}{\mathrm{Nr}}
 \newcommand{\fa}{\mathfrak a}
 \newcommand{\fb}{\mathfrak b}
 \newcommand{\fr}{\mathfrak r}
 \newcommand{\fn}{\mathfrak n}
 \newcommand{\cO}{\mathcal{O}}
 \newcommand{\cA}{\mathcal{A}}
 \newcommand{\cG}{\mathcal{G}}
 \renewcommand{\cR}{\mathcal{R}}
 \renewcommand{\cH}{\mathcal{H}}
 \newcommand{\cT}{\mathcal{T}}
 \newcommand{\R}{\mathbb{R}}
 \newcommand{\F}{\mathbb{F}}
 \newcommand{\M}{\mathbb{M}}
 \newcommand{\Q}{\mathbb{Q}}
 \newcommand{\Z}{\mathbb{Z}}
 \renewcommand{\P}{\mathbb{P}}
 \newcommand{\To}{\longrightarrow}
 \newcommand{\bs}{\setminus}
 \newcommand{\G}{\Gamma}
 \newcommand{\La}{\Lambda}
 \newcommand{\la}{\lambda}
 \newcommand{\norm}[1]{\| #1\|}
\begin{document}

\title[Generators of arithmetic groups]{On generators of arithmetic groups over function fields}

\author{Mihran Papikian}

\address{Department of Mathematics, Pennsylvania State University, University Park, PA 16802}

\email{papikian@math.psu.edu}

\thanks{The author was supported in part by NSF grant DMS-0801208 and Humboldt Research Fellowship.}



\begin{abstract} Let $F=\mathbb{F}_q(T)$ be the field of rational functions with
$\mathbb{F}_q$-coefficients, and $A=\mathbb{F}_q[T]$ be the subring
of polynomials. Let $D$ be a division quaternion algebra over $F$
which is split at $1/T$. Given an $A$-order $\La$ in $D$, we find an
explicit finite set generating $\La^\times$.
\end{abstract}


\maketitle



\section{Introduction} \noindent\textbf{Notation.}\hfill

$\F_q$ = the finite field with $q$ elements;

\hspace{0.3in} \textit{throughout the article $q$ is assumed to be
odd}.

$A=\F_q[T]$, $T$ indeterminate.

$F=\F_q(T)$ = the fraction field of $A$.

$|F|$ = the set of places of $F$.

For $x\in |F|$, $F_x$ = the completion of $F$ at $x$.

$\cO_x = \{z\in F_x\ |\ \ord_x(z)\geq 0\}$ = the ring of integers of
$F_x$.

$\F_x$ = the residue field of $\cO_x$; $\deg(x)=[\F_x:\F_q]$.

For $0\neq f\in A$, $\deg(f)$ = the degree of $f$ as a polynomial in
$T$, and $\deg(0)=+\infty$.

For $f/g\in F$, $\deg(f/g)=\deg(f)-\deg(g)$.

$\ord=-\deg$ defines a valuation on $F$; the corresponding place is
denoted by $\infty$.

$K=F_\infty$.

$\cO=\cO_\infty$.

$\pi=T^{-1}$ = uniformizer at infinity.

\vspace{0.1in}

Let $D$ be a quaternion division algebra over $F$ such that
$D\otimes_F K\cong \M_2(K)$. Let $\La$ be an $A$-order in $D$, i.e.,
$\La$, as a subring of $D$ containing $A$, is also an $A$-module
containing $4$ linearly independent generators over $F$. The
subgroup $\G:=\La^\times$ of units of $\La$ consists of elements
$\la\in \La$ with reduced norm $\Nr(\la)\in\F_q^\times$. It is known
that $\G$ is infinite finitely generated group. The problem of
finding explicit sets of generators for $\G$ naturally arises in the
study of these arithmetic groups. In this paper we develop a method
for finding such explicit sets.

Now we explain what we mean by ``explicit'' and state the main
result of the paper. Let $\fr\in A$ be the discriminant of $D$ -
this is the product of the monic generators of the primes in $A$
where $D$ ramifies; see Section \ref{Sec3}. There exists $\fa\in A$
such that $D$ is isomorphic to the algebra over $F$ with basis
$1,i,j, ij$ satisfying the relations
$$
i^2=\fa,\quad j^2=\fr,\quad ij=-ji
$$
(see Lemma \ref{lemHab}). Next, one needs to find an $A$-basis of
$\La$ in terms of $\{1,i,j,ij\}$. Although our method works for
general $A$-orders, to simplify the exposition, we restrict the
consideration to
$$
\La=A\oplus A i\oplus A j\oplus A ij
$$
which has the simplest presentation in terms of the basis
$\{1,i,j,ij\}$. Intrinsically, this is the level-$\fa$ Eichler order
in $D$. In this case,
$$
\G=\left\{a+bi+cj+dij\ \bigg|\ \begin{matrix} a,b,c,d\in A\\
a^2-\fa b^2-\fr c^2+\fa\fr d^2\in \F_q^\times\end{matrix}\right\}.
$$
Now the problem is to find finitely many quadruples $(a_n, b_n, c_n,
d_n)\in A^4$, $1\leq n\leq N$ for some $N$, such that
$\gamma_n=a_n+b_ni+c_nj+d_nij\in \La$ are in $\G$ and generate $\G$.
Finding minimal explicit sets of generators is very difficult. In
fact, for the analogous problem over $\Q$ only in finitely many
cases an explicit minimal set of generators is known, cf. \cite{AB}.
(See \cite{PapLocProp} for some examples in the function field
case.) Instead of trying to find minimal sets of generators for
$\G$, we look for possibly larger sets which are easy to describe.
\begin{defn}
Put $\deg(0)=0$ and for $\la=a+bi+cj+dij\in \La$ define
$$
\norm{\la}=\max(\deg(a), \deg(b), \deg(c),\deg(d)).
$$
\end{defn}

The main result of the paper is the following:

\begin{thm}\label{thm1} The finite subset
$$
S=\{\gamma\in \G\ |\ \norm{\gamma}\leq 4\deg(\fa\fr)+6\}
$$
generates $\G$.
\end{thm}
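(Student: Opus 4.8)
The plan is to let $\G$ act on the Bruhat--Tits tree of $\PGL_2(K)$ and to read an explicit generating set off a fundamental domain. Fix a splitting $D\otimes_F K\cong\M_2(K)$; after the choice of $\fa$ one may realize it through a square root $\theta=\sqrt{\fa}\in K$, under which $\la=a+bi+cj+dij$ maps to $\left(\begin{smallmatrix} a+b\theta & \fr(c-d\theta)\\ c+d\theta & a-b\theta\end{smallmatrix}\right)$, a matrix whose determinant is exactly $\Nr(\la)\in\F_q^\times$. Hence $\G$ embeds in $\GL_2(K)$ with reduced norm a unit and acts on the $(q+1)$-regular tree $\cT$ of homothety classes of $\cO$-lattices in $K^2$, with base vertex $v_0=[\cO^2]$. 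Since $D$ is a division algebra, $D^\times/F^\times$ is anisotropic, so the quotient graph $Y=\G\backslash\cT$ is \emph{finite} --- there are no cusps, in contrast with the $\GL_2(F)$ case. By Serre's structure theory for groups acting on trees, $\G$ is generated by the stabilizers of the vertices in a lift to $\cT$ of a spanning tree of $Y$, together with finitely many elements identifying the remaining edges of $Y$.

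The first ingredient is a dictionary between the tree distance $d(v_0,\la v_0)$ and the norm $\norm{\la}$. Writing the Cartan decomposition $M=k_1\,\mathrm{diag}(\pi^{m},\pi^{-m})\,k_2$ with $k_1,k_2\in\GL_2(\cO)$ (legitimate since $\ord_\infty\det=0$), one has $d(v_0,Mv_0)=2|m|$, where $-m$ is the most negative value of $\ord_\infty$ among the entries of $M$. Evaluating $\ord_\infty$ on the entries above, using $\ord_\infty(a)=-\deg(a)$ for $a\in A$ and $\ord_\infty(\theta)=-\tfrac12\deg(\fa)$, yields an inequality of the shape $\norm{\la}\le\tfrac12 d(v_0,\la v_0)+\deg(\fa\fr)+c_0$ together with a comparable reverse bound. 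Thus a bound on the distances traversed by the generators translates directly into the sought bound on $\norm{\cdot}$.

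The core of the argument is then a reduction, or equivalently a bound on the diameter of a fundamental domain. I would argue that, given $\la\in\G$ with $\norm{\la}$ large --- so that, by the dictionary, $\la v_0$ is far from $v_0$ --- there is an element $s\in S$ realizing the first edge of the geodesic from $v_0$ to $\la v_0$, for which $d(v_0,s\la v_0)<d(v_0,\la v_0)$. Iterating expresses $\la$ as a product of elements of $S$ times a residual element whose displacement, and hence (by the dictionary) whose norm, is $\le 4\deg(\fa\fr)+6$; that residual element therefore already lies in $S$, and $\G=\lg S\rg$ follows. The finitely many vertex stabilizers consist of elements fixing a vertex within bounded distance of $v_0$, hence, by the dictionary, of norm within the same bound, so they too lie in $S$.

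The main obstacle is to make this reduction effective with exactly the constant $4\deg(\fa\fr)+6$. Concretely, one must show that at \emph{each} step a single generator of norm $\le4\deg(\fa\fr)+6$ --- not merely some long word in $\G$ --- strictly decreases $d(v_0,\la v_0)$, and that the fundamental domain does not protrude beyond radius linear in $\deg(\fa\fr)$ from $v_0$. Controlling this requires carefully tracking the valuation inflation produced by the factors $\fr$ and $\theta=\sqrt{\fa}$ in the off-diagonal entries, and it is precisely this bookkeeping through the relations $i^2=\fa$, $j^2=\fr$ that pins down both the coefficient $4$ and the additive constant $6$.
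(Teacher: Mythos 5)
Your framework (embed $\G$ in $\GL_2(K)$, act on the Bruhat--Tits tree, compare $\norm{\cdot}$ with tree displacement, and generate from a fundamental domain) is the same as the paper's, and your ``dictionary'' between $\norm{\la}$ and $d(v_0,\la v_0)$ is essentially what Propositions \ref{prop5.3} and \ref{prop4.5} establish (though the paper has to work harder than a Cartan-decomposition count of entry valuations: it analyzes the axis $\cA(\gamma)$, its translation length and its distance to $O$, and treats elliptic elements separately, precisely because of possible cancellation in entries like $a+b\sqrt{\fa}$). The genuine gap is the step you yourself flag as ``the main obstacle'': the claim that the fundamental domain, equivalently the diameter of $\G\bs\cT$, is \emph{linear} in $\deg(\fa\fr)$. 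This cannot be extracted from ``bookkeeping through the relations $i^2=\fa$, $j^2=\fr$.'' The quotient graph has about $q^{\deg(\fa\fr)}/(q-1)(q^2-1)$ vertices (Lemma \ref{lem3.7}, via the mass/genus formulas), so a priori its diameter could be exponential in $\deg(\fa\fr)$ --- and indeed the purely reduction-theoretic approach of Chalk--Kelly over $\Q$ yields exactly such an exponential bound. Equivalently, your key reduction step (``a single $s\in S$ strictly decreases $d(v_0,\la v_0)$'') is not a local computation; it is a restatement of the diameter bound and needs a global input.

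The paper supplies that input in Proposition \ref{prop3.9}: a principal congruence cover $\G'\bs\cT$ is a $(q+1)$-regular Ramanujan graph by the theorem of Lubotzky--Samuels--Vishne, Ramanujan graphs on $n$ vertices have diameter at most $2\log_{q}(n)+\log_q(4)$ (Lemma \ref{LPS}), and $D(\G\bs\cT)\leq D(\G'\bs\cT)$ with $\#\Ver(\G'\bs\cT)<q^4\cdot\#\Ver(\G\bs\cT)$. Combining with the vertex count turns the exponential count into the linear bound $D(\G\bs\cT)\leq 2\deg(\fa\fr)+3=:B$, and then the elements with $\gamma\cT_B\cap\cT_B\neq\emptyset$ (which generate $\G$ by Serre's criterion, Proposition \ref{lemDiam}) have $\norm{\gamma}\leq 2B=4\deg(\fa\fr)+6$. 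Without importing the Ramanujan (or at least a spectral-gap) property, your argument as sketched would only prove the theorem with $4\deg(\fa\fr)+6$ replaced by a quantity exponential in $\deg(\fa\fr)$.
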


To prove Theorem \ref{thm1} we use the action of $\G$ on the
Bruhat-Tits tree $\cT$ of $\PGL_2(K)$. The key is to quantify the
discontinuity of the action of $\G$ on $\cT$. More precisely, for a
ball $\cT_B$ of radius $B$ around a fixed vertex and $\gamma\in \G$,
we show that $\gamma \cT_B\cap \cT_B=\emptyset$ once
$\norm{\gamma}>2B$. To deduce Theorem \ref{thm1} from this result,
one needs strong bounds on the diameter of the quotient graph $\G\bs
\cT$. Such a bound follows from the property of $\G\bs \cT$ being
covered by a Ramanujan graph.

A problem of similar nature over $\Q$ was considered by Chalk and
Kelly in \cite{CK}, although the bound they obtain is exponentially
worse than ours. The approach in \cite{CK} is analytic in nature and
relies on the study of isometric circles of $\G$ (see also
\cite{Johansson}).

\begin{rem}
Theorem \ref{thm1} raises the following interesting question: What
is the minimal $\sigma$ independent of $\G$ such that there exists a
constant $\delta$ (also independent of $\G$) with the property that
the set
$$
\{\gamma\in \G\ |\ \norm{\gamma}\leq \sigma \deg(\fa\fr)+\delta\}
$$
generates $\G$? The theorem gives $\sigma \leq 4$. We have a trivial
lower bound $1/4\leq \sigma$, since according to Proposition
\ref{prop2.5} the cardinality of a minimal set of generators for
$\G$ is approximately $q^{\deg(\fa\fr)}$.
\end{rem}

\begin{rem}
After this article was essentially completed, Ralf Butenuth informed
me that he obtained a result similar to Theorem \ref{thm1} by a
different method.
\end{rem}


\section{Arithmetic of quaternion algebras}\label{Sec3} In this section we
recall some facts about quaternion algebras. The standard reference
for this material is \cite{Vigneras}.

Let $D$ be a \textit{quaternion algebra} over $F$, i.e., a
$4$-dimensional $F$-algebra with center $F$ which does not possess
non-trivial two-sided ideals. A quaternion algebra is either a
division algebra or is isomorphic to the algebra of $2\times 2$
matrices. If $L$ is a field containing $F$, then $D\otimes_F L$ is a
quaternion algebra over $L$. Let $x\in |F|$ and denote
$D_x:=D\otimes_F F_x$. We say that $D$ \textit{ramifies} (resp.
\textit{splits}) at $x$ if $D_x$ is a division algebra (resp. is
isomorphic to $\M_2(F_x)$). Let $R\subset |F|$ be the set of places
where $D$ ramifies. It is known that $R$ is a finite set of even
cardinality, and conversely, for any choice of a finite set
$R\subset |F|$ of even cardinality there is a unique, up to an
isomorphism, quaternion algebra ramified exactly at the places in
$R$. In particular, $D\cong \M_2(F)$ if and only if $R=\emptyset$.

Explicitly quaternion algebras can be given as follows.  For $a,
b\in F^\times$, let $H(a,b)$ be the $F$-algebra with basis $1,i,j,
ij$ (as an $F$-vector space), where $i,j$ satisfy
$$
i^2=a,\quad j^2=b,\quad ij=-ji.
$$
$H(a,b)$ is a quaternion algebra, and any quaternion algebra $D$ is
isomorphic to $H(a,b)$ for some $a,b\in F^\times$ (although $a$ and
$b$ are not uniquely determined by $D$, e.g. $H(a,b)\cong H(b,a)$).

From now on we assume that $D$ is a division algebra (equiv. $R\neq
\emptyset$). Let $L\neq F$ be a non-trivial field extension of $F$.
Then $L$ embeds into $D$, i.e., there is an $F$-isomorphism of $L$
onto an $F$-subalgebra of $D$, if and only if $[L:F]=2$ and places
in $R$ do not split in $L$.

There is a canonical involution $\alpha\mapsto \alpha'$ on $D$ which
is the identity on $F$ and satisfies $(\alpha\beta)'=\beta'\alpha'$.
The \textit{reduced trace} of $\alpha$ is
$\Tr(\alpha)=\alpha+\alpha'$; the \textit{reduced norm} of $\alpha$
is $\Nr(\alpha)=\alpha\alpha'$; the \textit{reduced characteristic
polynomial} of $\alpha$ is
$$f(x)=(x-\alpha)(x-\alpha')=x^2-\Tr(\alpha)x+\Nr(\alpha).$$
If $\alpha\not \in F$, then the reduced trace and norm of $\alpha$
are the images of $\alpha$ under the trace and norm of the quadratic
field extension $F(\alpha)/F$.

From now on we also assume that $\infty\not \in R$. For $x\in
|F|-\infty$, denote by $(x)$ the prime ideal of $A$ corresponding to
$x$. Let $f_x\in A$ be the monic generator of $(x)$, and let
$\fr=\prod_{x\in R}f_x$. Given $a, b\in A$, with $b$ irreducible and
coprime to $a$, let
$$
\left(\frac{a}{b}\right)=\left\{
                           \begin{array}{ll}
                             1, & \hbox{if $a$ is a square mod $(b)$} \\
                             -1, & \hbox{otherwise}
                           \end{array}
                         \right.
$$
be the Legendre symbol. Let
$$
\Odd(R)=\left\{
         \begin{array}{ll}
           1, & \hbox{if $\deg(x)$ is odd for all $x\in R$;} \\
           0, & \hbox{otherwise.}
         \end{array}
       \right.
$$

\begin{lem}\label{lemHab}\hfill
\begin{enumerate}
\item Suppose $\Odd(R)=0$. There is a monic irreducible polynomial $\fa\in A$ of even degree
which is coprime to $\fr$ and satisfies
$$
\left(\frac{\fa}{f_x}\right)=-1\text{ for all }x\in R.
$$
For such $\fa$, $D\cong H(\fa, \fr)$.
\item Suppose $\Odd(R)=1$. Let $\xi\in \F_q$
be a non-square. Then $D\cong H(\xi, \fr)$.
\end{enumerate}
\end{lem}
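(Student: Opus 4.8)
The guiding principle is the classification recalled above: a quaternion algebra over $F$ is determined up to isomorphism by its finite, even set of ramified places. Hence in both parts it suffices to check that the explicit algebra --- $H(\fa,\fr)$ in (1) and $H(\xi,\fr)$ in (2) --- ramifies at precisely the places of $R$. Since $H(a,b)$ ramifies at $x\in|F|$ exactly when the local Hilbert symbol $(a,b)_x$ equals $-1$, the whole lemma reduces to a place-by-place computation of these symbols. I would recall the standard evaluations at a place $x$ of odd residue characteristic (every place, as $q$ is odd): the symbol of two $x$-adic units is $+1$, while $(\pi_x,u)_x$ for a uniformizer $\pi_x$ and a unit $u$ equals the quadratic-residue symbol of the image of $u$ in the residue field $\F_x$.

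For part (2) the computation is short. At a finite place $x\nmid\fr$ both $\xi$ and $\fr$ are units, so $(\xi,\fr)_x=+1$. At $x\in R$, writing $\fr=f_x\cdot(\fr/f_x)$ with $\fr/f_x$ a unit, one gets $(\xi,\fr)_x=(\xi,f_x)_x=\left(\frac{\xi}{f_x}\right)$, the residue-symbol of $\xi$ in $\F_x=\F_{q^{\deg x}}$. The arithmetic input is that a non-square $\xi\in\F_q^\times$ remains a non-square in $\F_{q^n}$ iff $n$ is odd, since $\xi^{(q^n-1)/2}=(-1)^n$; as $\Odd(R)=1$ every $\deg x$ is odd, so $(\xi,\fr)_x=-1$. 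Finally at $\infty$: because $R$ has even cardinality and consists of odd-degree places, $\deg\fr$ is even, whence $\fr$ is a square in $K=\F_q\comf{\pi}$ and $(\xi,\fr)_\infty=+1$. Thus $H(\xi,\fr)$ ramifies exactly on $R$.

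For part (1) I would first produce $\fa$. By the Chinese Remainder Theorem choose a residue that is a non-square modulo each $f_x$ ($x\in R$); this pins down a single invertible class modulo $\fr$, and the function-field Dirichlet theorem (the prime polynomial theorem in arithmetic progressions) provides, for every sufficiently large $n$, a monic irreducible of degree $n$ in that class. Taking $n$ even yields $\fa$ coprime to $\fr$, of even degree, with $\left(\frac{\fa}{f_x}\right)=-1$ for all $x\in R$. Then: at finite $x$ coprime to both $\fa$ and $\fr$ the symbol is $+1$; at $x\in R$ it equals $\left(\frac{\fa}{f_x}\right)=-1$ as designed; and at $\infty$ the even degree of the monic polynomial $\fa$ makes it a square in $K$, forcing $(\fa,\fr)_\infty=+1$.

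The remaining, and most delicate, place is the prime $\fa$ itself: there $\fa$ is the uniformizer and $\fr$ a unit, so $(\fa,\fr)_\fa=\left(\frac{\fr}{\fa}\right)=\prod_{y\in R}\left(\frac{f_y}{\fa}\right)$. This is where quadratic reciprocity for $\F_q[T]$ enters: since $\deg\fa$ is even, the reciprocity sign $(-1)^{\frac{q-1}{2}\deg f_y\deg\fa}$ is trivial, so $\left(\frac{f_y}{\fa}\right)=\left(\frac{\fa}{f_y}\right)=-1$, and the product over the even set $R$ gives $(\fa,\fr)_\fa=(-1)^{|R|}=+1$. Hence $\fa$ is a split place and $H(\fa,\fr)$ ramifies exactly on $R$. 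I expect this reciprocity cancellation --- the interplay of the even degree of $\fa$ (which both kills the reciprocity sign and guarantees splitting at $\infty$) with the even cardinality of $R$ --- to be the real content of the argument; the rest is the standard local evaluation of Hilbert symbols, and the existence of $\fa$ is routine once Dirichlet's theorem is invoked.
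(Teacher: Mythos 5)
Your proof is correct and follows essentially the same route as the paper: construct $\fa$ via CRT plus the function-field Dirichlet theorem, and verify ramification place by place through Hilbert symbols, with quadratic reciprocity and the even degree of $\fa$ handling the place $(\fa)$. The only (harmless) divergence is at $\infty$, where you compute the symbol directly by noting that a monic polynomial of even degree is a square in $K$, whereas the paper deduces splitting at $\infty$ from the parity of the number of ramified places.
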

\begin{proof} The proof of this lemma is quite standard; we give the
details for completeness.

(1) By the Chinese Remained Theorem \cite[Prop. 1.4]{Rosen}, there
exists $a\in A$ such that $(a/f_x)=-1$ for all $x\in R$. Consider
the set of polynomials $\{a+\fr b\ |\ b\in A\}$. By \cite[Thm.
4.8]{Rosen}, this set contains irreducible monic polynomials $\fa$
of even degrees. Fix such $\fa$. It is clear that $\fa$ satisfies
$(\fa/f_x)=-1$ for all $x\in R$. Next, by the Reciprocity Law
\cite[Thm. 3.3]{Rosen}
$$
\left(\frac{f_x}{\fa}\right)=(-1)^{\frac{q-1}{2}\deg(\fa)\deg(f_x)}\left(\frac{\fa}{f_x}\right).
$$
Since $\deg(\fa)$ is even, the right hand-side is equal to $-1$.
Hence
\begin{equation}\label{eq-frfa}
\left(\frac{\fr}{\fa}\right)=\prod_{x\in
R}\left(\frac{f_x}{\fa}\right)=(- 1)^{\# R}=1.
\end{equation}

Now we show that $D\cong H(\fa, \fr)$. It is enough to check that
$H(\fa,\fr)$ is ramified exactly at the places in $R$. For this we
need to show that the Hilbert symbol $(\fa,\fr)_x$ is $-1$ if and
only if $x\in R$, cf. \cite[p. 32]{Vigneras}. By \cite[p.
217]{SerreLF}, for $x\in |F|-(\fa)-\infty$, $(\fa,\fr)_x=1$ if and
only if $\fa^{\ord_x(\fr)}$ is a square modulo $(x)$. Now
$\ord_x(\fr)=0$ if $x\in |F|-R-\infty$, and $\ord_x(\fr)=1$ if $x\in
R$. Observe that the image of $\fa$ is not a square in $\F_x$ for
$x\in R$ by the choice of $\fa$. Therefore, $H(\fa, \fr)$ is
ramified at the places in $R$ and is unramified at
$|F|-R-\infty-(\fa)$. By the same argument, $H(\fa,\fr)$ is
unramified at $(\fa)$, since $\fr^{\ord_{(\fa)}(\fa)}=\fr$ is a
square modulo $(\fa)$ by (\ref{eq-frfa}). Finally, $H(\fa, \fr)$ is
unramified at $\infty$ since the number of places where a quaternion
algebra ramifies is even.

(2) Note that $\xi$ in not a square in $\F_x$, $x\in R$, since
$\deg(x)$ is odd. Now apply the argument in the previous paragraph.
\end{proof}

\begin{defn}
Let $\cR$ be a Dedekind domain with quotient field $L$ and let $B$
be a quaternion algebra over $L$. For any finite dimensional
$L$-vector space $V$, an \textit{$\cR$-lattice} in $V$ is a finitely
generated $\cR$-submodule $M$ in $V$ such that $L\otimes_\cR M\cong
V$. An \textit{$\cR$-order} in $B$ is a subring $\La$ of $B$, having
the same unity element as $B$, and such that $\La$ is an
$\cR$-lattice in $B$. A \textit{maximal $\cR$-order} in $B$ is an
$\cR$-order which is not contained in any other $\cR$-order in $B$.
\end{defn}

Let $\La$ be an $A$-order in $D$. It is known that $\La$ is maximal
if and only if $\La_x:=\La\otimes_A \cO_x$ is a maximal
$\cO_x$-order in $D_x$ for all $x\in |F|-\infty$. A maximal
$\cO_x$-order in $D_x$ is unique if $x\in R$ - it is the integral
closure of $\cO_x$ in $D_x$. On the other hand, for $x\not\in R$,
$\La_x$ is maximal if and only if there is an invertible element
$u\in \M_2(F_x)$ such that $u\La_x u^{-1}=\M_2(\cO_x)$.

\begin{defn}
Suppose $\fn\in A$ is square-free and coprime to $\fr$. $\La$ is an
\textit{Eichler order of level-$\fn$} if $\La_x$ is maximal for all
$x\in R$, and for $x\in |F|-R-\infty$ it is isomorphic to the
subring of $\M_2(\cO_x)$ given by the matrices
$$
\left\{ \begin{pmatrix} a & b \\ \fn c & d\end{pmatrix}\ |\
a,b,c,d\in \cO_x\right\}.
$$

Suppose $\La=Ae_1\oplus Ae_2\oplus Ae_3\oplus Ae_4$, where
$e_1,\dots, e_4$ is a basis of $D$ as an $F$-vector space. The
\textit{discriminant} of $\La$ is the ideal of $A$ generated by
$\det(\Tr(e_ie_j))_{i,j}$. It is known that the discriminant of any
order is divisible by $(\fr)^2$. Moreover, the maximal orders are
uniquely characterized by the fact that their discriminant is
$(\fr)^2$, and the level-$\fn$ Eichler orders are uniquely
characterized by the fact that their discriminants are equal to
$(\fn\fr)^2$. By a theorem of Eichler, since $D$ splits at $\infty$,
all maximal $A$-orders are conjugate in $D$; the same is true also
for the level-$\fn$ Eichler orders.
\end{defn}

\begin{defn}\label{defSO}
The order $\La=A\oplus A i\oplus Aj\oplus Aij$ in $H(\fa,\fr)$ will
be called the \textit{standard order}. By computing its
discriminant, we see that $\La$ is a level-$\fa$ Eichler order. In
particular, $\La$ is maximal if and only if $\fa\in \F_q^\times$.
\end{defn}

Given an $A$-order $\La$, the group $\La^\times$ of its invertible
elements consists of
$$
\{\la\in \La\ |\ \Nr(\la)\in \F_q^\times\}.
$$
If $\la\in \La^\times$ is a torsion element, then it is algebraic
over $\F_q$. This easily implies that $\la$ is torsion if and only
if $\Tr(\la)\in \F_q$; such elements will be called
\textit{elliptic}. An element $\la\in \La^\times$ which is not
elliptic will be called \textit{hyperbolic}.

\begin{lem}
If $\la$ is hyperbolic, then its image in $\GL_2(K)$ under an
embedding $D\hookrightarrow \M_2(K)$ has two distinct $K$-rational
eigenvalues.
\end{lem}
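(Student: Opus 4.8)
The plan is to pass from $\la$ to its reduced characteristic polynomial and reduce everything to a statement about a discriminant living in $K$. Fix an embedding $D\hookrightarrow \M_2(K)$ coming from $D\otimes_F K\cong \M_2(K)$. Under this isomorphism the reduced trace and reduced norm of $\la$ coincide with the matrix trace and determinant of its image, so the characteristic polynomial of the image matrix equals the reduced characteristic polynomial $f(x)=x^2-\Tr(\la)x+\Nr(\la)$. Thus the two eigenvalues are the roots of $f$, and the assertion ``two distinct $K$-rational eigenvalues'' is equivalent to saying that the discriminant $\delta:=\Tr(\la)^2-4\Nr(\la)$ is a nonzero square in $K$. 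The whole lemma therefore comes down to analyzing $\delta$.

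First I would record the two arithmetic inputs. Since $\la$ lies in an $A$-order it is integral over $A$, so $\Tr(\la)\in A$; and since $\la\in\La^\times$ we have $\Nr(\la)\in\F_q^\times$, a nonzero constant. Because $\la$ is hyperbolic, $\Tr(\la)\notin \F_q$, i.e. $t:=\Tr(\la)$ is a polynomial with $\deg(t)\geq 1$. For distinctness of the eigenvalues I note that $\delta=t^2-4\Nr(\la)$ cannot vanish: the polynomial $t^2$ has degree $2\deg(t)\geq 2$, whereas $4\Nr(\la)$ is a nonzero constant, so $t^2\neq 4\Nr(\la)$ and hence $\delta\neq 0$. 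This already gives two distinct eigenvalues once we know they lie in $K$.

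The main point, and the step I expect to require the most care, is $K$-rationality, i.e. that $\delta$ is a square in $K=F_\infty$. Here I would factor $\delta=t^2\bigl(1-4\Nr(\la)/t^2\bigr)$. Since $\deg(t)\geq 1$ we have $\ord_\infty(t^{-2})=2\deg(t)\geq 2$, so $4\Nr(\la)/t^2$ lies in the maximal ideal $\fm$ of $\cO=\cO_\infty$ and $u:=1-4\Nr(\la)/t^2$ is a principal unit. As $q$ is odd, $2\in\cO^\times$ and an application of Hensel's lemma to $x^2-u$ shows that every principal unit of $\cO$ is a square; hence $u\in(\cO^\times)^2$. Together with the fact that $t^2$ is visibly a square in $F^\times\subset K^\times$, this yields $\delta\in(K^\times)^2$. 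Consequently $f(x)=x^2-tx+\Nr(\la)$ splits over $K$ with roots $\tfrac{1}{2}\bigl(t\pm\sqrt{\delta}\bigr)\in K$, which are distinct because $\delta\neq 0$; these are exactly the eigenvalues of the image of $\la$ in $\GL_2(K)$, completing the argument.
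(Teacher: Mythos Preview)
Your proof is correct and follows essentially the same route as the paper: both reduce to showing that the discriminant $\delta=\Tr(\la)^2-4\Nr(\la)$ is a nonzero square in $K$, using that $\Tr(\la)\in A$ has degree $\geq 1$ and $\Nr(\la)\in\F_q^\times$. The paper phrases the key step as ``$\delta$ is a nonzero polynomial of even degree with square leading coefficient, hence a square in $K$,'' while you unpack this same fact via the factorization $\delta=t^2(1-4\Nr(\la)/t^2)$ and Hensel's lemma on the principal unit; these are the same argument at different levels of detail.
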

\begin{proof}
The reduced characteristic polynomial of $\la$ is
$h_\la:=x^2+\Tr(\la)x+\kappa$, where $\kappa\in \F_q^\times$. Since
$\la\not\in F^\times$, $F(\la)$ is quadratic, and therefore $h_\la$
is irreducible. Next, since $s:=\Tr(\la)\in A$ has degree $\geq 1$,
$s^2-4\kappa$ is a non-zero polynomial of even degree whose leading
coefficient is a square. This implies that $h_\la$ splits over $K$
and has distinct roots.
\end{proof}

Denote
$$
g(R)=1+\frac{1}{q^2-1}\prod_{x\in
R}(q^{\deg(x)}-1)-\frac{q}{q+1}\cdot 2^{\# R-1}\cdot \Odd(R).
$$

Let $\La$ be the standard order in $H(\fa,\fr)$, where $\fa$ is a
monic irreducible polynomial of even degree if $\Odd(R)=0$, and
$\fa=\xi$ is a non-square in $\F_q$ if $\Odd(R)=1$, cf. Lemma
\ref{lemHab}. Denote $\G:=\La^\times$. Note that $\F_q^\times$ is in
the center of $\G$.

\begin{prop}\label{prop2.5}\hfill
\begin{enumerate}
\item $\G/\F_q^\times$ has non-trivial torsion if and only
if $\Odd(R)=1$.
\item Suppose $\Odd(R)=1$. $\G$ can be generated by
$g(R)+2^{\#R-1}$ elements.
\item Suppose $\Odd(R)=0$. $\G/\F_q^\times$ is a free group of rank
$$
1+(q^{\deg(\fa)}+1)(g(R)-1).
$$
\end{enumerate}
\end{prop}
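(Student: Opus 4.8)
The plan is to study the action of $\G$ on the Bruhat-Tits tree $\cT$ of $\PGL_2(K)$ and to compute the quotient graph $\G \bs \cT$ via Bass-Serre theory. Since $D$ splits at $\infty$, we have an embedding $\G \hookrightarrow \GL_2(K)$, and $\G$ acts on $\cT$ with $\F_q^\times$ (the center) acting trivially, so the action factors through $\G/\F_q^\times \hookrightarrow \PGL_2(K)$. The key geometric input is that this action is cocompact (as $D$ is a division algebra, so $\G$ is a cocompact lattice), hence $\G \bs \cT$ is a finite graph. By the structure theory of groups acting on trees, $\G/\F_q^\times$ is isomorphic to the fundamental group of the graph of groups $\G \bs\!\!\bs\, \cT$, whose vertex and edge groups are the images in $\PGL_2(K)$ of the stabilizers of vertices and edges of $\cT$ in $\G/\F_q^\times$.

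First I would analyze the vertex and edge stabilizers. A vertex $v$ of $\cT$ corresponds to a maximal order $\cO_v$ in $\M_2(K)$, and its stabilizer in $\GL_2(K)$ intersected with $\G$ consists of units in an $A$-order; because $\G$ is the unit group of a \emph{definite-at-$\infty$} quaternion order, these stabilizers modulo $\F_q^\times$ are finite. The torsion in $\G/\F_q^\times$ comes precisely from nontrivial finite stabilizers, i.e. from elliptic elements. Part (1) would then follow by showing that a nontrivial finite subgroup of $\PGL_2(K)$ arising here must contain an element of order dividing $q+1$ or $q-1$ coming from an embedded quadratic extension $F(\la)/F$; analyzing when such an elliptic $\la$ can lie in $\La$ and have the requisite local behavior at the ramified places reduces to whether the relevant quadratic orders embed, and this embedding criterion (places in $R$ not splitting) is controlled exactly by the parity condition $\Odd(R)$. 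For (3), when $\Odd(R)=0$ the action is free (no nontrivial torsion), so $\G/\F_q^\times$ is the fundamental group of the finite graph $\G \bs \cT$ — a free group — of rank $1 - \chi(\G \bs \cT)$, where $\chi$ is the Euler characteristic. Computing $\chi$ requires counting vertices and edges of the quotient, which by a mass-formula / class-number argument is governed by $g(R)$ together with the local factor $(q^{\deg(\fa)}+1)$ measuring the splitting of the Eichler level $\fa$ at the vertices of $\cT$.

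For part (2), when $\Odd(R)=1$ the group $\G/\F_q^\times$ is a fundamental group of a graph of \emph{finite} groups, and I would bound its number of generators by combining generators coming from the spanning-tree loops (a free part of rank governed by the same Euler-characteristic computation, contributing $g(R)$) with generators of the finitely many vertex groups (the elliptic contributions, contributing the factor $2^{\#R-1}$). The count $g(R) + 2^{\#R-1}$ should emerge by bounding the number of nontrivial vertex groups and adding one generator per independent loop.

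The main obstacle I expect is the precise Euler-characteristic / mass computation yielding the explicit constant $g(R)$ and the factor $(q^{\deg(\fa)}+1)$. This is where the arithmetic of the quaternion algebra enters quantitatively: one must count (with multiplicity) the orbits of vertices and edges under $\G$, which amounts to an Eichler-type mass formula for the order $\La$, and then correct for torsion via the stabilizer sizes. Relating this count cleanly to the product $\prod_{x\in R}(q^{\deg(x)}-1)$ and the $\Odd(R)$ correction term — rather than to an unwieldy sum over orbits — is the delicate step, and I would expect it to rely on known genus/mass formulas for Drinfeld-type quotients $\G \bs \cT$, together with a careful local analysis at the level prime $\fa$ and at the ramified primes in $R$.
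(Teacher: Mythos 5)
Your overall strategy---Bass--Serre theory for the action of $\G$ on the Bruhat--Tits tree $\cT$, with torsion detected by elliptic elements and the rank extracted from the Euler characteristic of $\G\bs\cT$---is the standard route to such statements, and it is essentially how the cited reference \cite[Thm.~5.7]{PapLocProp} establishes the maximal-order case. The paper itself, however, does something much shorter: parts (1) and (2) are simply quoted from that reference, and for part (3) the paper does \emph{not} recompute the quotient graph for the Eichler order $\La$. Instead it takes a maximal order $\Upsilon\supset\La$, uses that $\Upsilon^\times/\F_q^\times$ is free of rank $g(R)$, observes that $[\Upsilon^\times/\F_q^\times:\G/\F_q^\times]=q^{\deg(\fa)}+1$ (a purely local computation at the level prime, cf.\ \cite[p.~212]{Miyake}), and applies the Nielsen--Schreier index formula to get rank $1+(q^{\deg(\fa)}+1)(g(R)-1)$. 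This sidesteps entirely the mass-formula computation for the Eichler order that you identify as your main obstacle: note that in the paper the vertex count of $\G\bs\cT$ (Lemma \ref{lem3.7}(2)) is \emph{deduced from} Proposition \ref{prop2.5}(3), not the other way around, so your plan runs the logic in the harder direction.

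The genuine gap is that the quantitative core of the proposition is never executed in your proposal. The three numbers that make the statement nontrivial---the constant $g(R)$ with its product $\prod_{x\in R}(q^{\deg(x)}-1)$ and the correction term $\frac{q}{q+1}2^{\#R-1}\Odd(R)$, the factor $q^{\deg(\fa)}+1$, and the bound $2^{\#R-1}$ on the number of generators contributed by torsion---are all deferred to unspecified ``mass formulas'' and ``optimal embedding counts,'' and you explicitly flag this as the delicate step. For part (1) your reduction of torsion to the embeddability of the constant quadratic extension $\F_{q^2}F$ (which a place $x$ splits in iff $\deg(x)$ is even, whence the condition $\Odd(R)=1$) is the right idea, but one still needs that the field embedding can be realized \emph{inside the order} $\La$, which is an optimal-embedding statement, not just a global splitting criterion. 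As written, the proposal is a correct plan whose hard steps coincide exactly with the content of the external theorem the paper invokes; to make it a proof you would either have to carry out the Eichler mass and embedding-number computations in full, or---more economically---adopt the paper's reduction of (3) to the maximal-order case via the Schreier index formula.
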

\begin{proof}
(1) and (2) follow from \cite[Thm. 5.7]{PapLocProp}. (3) Let
$\Upsilon$ be a maximal order containing $\La$. Denote
$\G'=\Upsilon^\times$. By \cite[Thm. 5.7]{PapLocProp},
$\G'/\F_q^\times$ is a free group of rank $g(R)$. It is not hard to
show that $[\G'/\F_q^\times:\G/\F_q^\times]=q^{\deg(\fa)}+1$, cf.
\cite[p. 212]{Miyake}. Now the claim that $\G/\F_q^\times$ is a free
group of given rank follows from Schreier's theorem \cite[p.
29]{SerreT}.
\end{proof}


\section{Geometry on the Bruhat-Tits tree}\label{sGBT}

We start by recalling some of the terminology from \cite{SerreT}.
Let $\cG$ be an (oriented) connected graph; see \cite[Def. 1, p.
13]{SerreT}. We denote by $\Ver(\cG)$ and $\Ed(\cG)$ the sets of
vertices and oriented edges of $\cG$, respectively. For $e\in
\Ed(\cG)$, $o(e), t(e)\in \Ver(\cG)$ and $\bar{e}\in \Ed(\cG)$
denote its origin, terminus and inversely oriented edge. We will
assume that for any $v\in \Ver(\cG)$ the number of edges $e$ with
$o(e)=v$ is finite; this number is the \textit{degree} of $v$. $\cG$
is \textit{$m$-regular} if every vertex in $\cG$ has degree $m$. The
\textit{distance} $d(v,w)$ between $v,w\in \Ver(\cG)$ in $\cG$ is
the obvious combinatorial distance, i.e., the number of edges in a
shortest path without backtracking connecting $v$ and $w$. The
\textit{diameter} $D(\cG)$ of a finite graph $\cG$ is the maximum of
the distances between its vertices. A graph in which a path without
backtracking connecting any two vertices $v$ and $w$ is unique is
called a \textit{tree}; the unique path between $v, w$ is called
\textit{geodesic}.

Let $\G$ be a group acting on a graph $\cG$, i.e., $\G$ acts via
automorphisms. We say that $v, w\in \Ver(\cG)$ are
$\G$-\textit{equivalent} if there is $\gamma \in \G$ such that
$\gamma v=w$. $\G$ acts with \textit{inversion} if there is
$\gamma\in \G$ and $e\in \Ed(\cG)$ such that $\gamma e=\bar{e}$. If
$\G$ acts without inversion, then we have a natural quotient graph
$\G\bs\cG$ such that $\Ver(\G\bs \cG)=\G\bs \Ver(\cG)$ and
$\Ed(\G\bs \cG)=\G\bs \Ed(\cG)$, cf. \cite[p. 25]{SerreT}.

Recall the notation $K:=F_\infty$ and $\cO:=\cO_\infty$. Let $V$ be
a $2$-dimensional $K$-vector space. Let $\La$ be an $\cO$-lattice in
$V$. For any $x\in K^\times$, $x\La$ is also a lattice. We call
$\La$ and $x\La$ equivalent lattices. The equivalence class of $\La$
is denoted by $[\La]$.

Let $\cT$ be the graph whose vertices $\Ver(\cT)=\{[\La]\}$ are the
equivalence classes of lattices in $V$, and two vertices $[\La]$ and
$[\La']$ are adjacent if we can choose representatives $L\in [\La]$
and $L'\in [\La']$ such that $L'\subset L$ and $L/L'\cong \F_q$. One
shows that $\cT$ is an infinite $(q+1)$-regular tree. This is the
\textit{Bruhat-Tits tree} of $\PGL_2(K)$.

Fix a vertex $[\La]$ of $\cT$. The set of vertices of $\cT$ at
distance $n$ from $[\La]$ is in natural bijection with
$\P^1(L/\pi^nL)$. An \textit{end} of $\cT$ is an equivalence class
of half-lines, two half-lines being equivalent if they differ in a
finite graph. Taking the projective limit over $n$, we get a
bijection
$$
\partial \cT:=\text{set of ends of }\cT\cong \P^1(\cO)=\P^1(K),
$$
which is independent of the choice of $[\La]$. Given two vectors
$f_1, f_2$ spanning $V$, we denote $[f_1, f_2]=[\cO f_1\oplus \cO
f_2]$. $\GL_2(K)$, as the group of $K$-automorphisms of $V$, acts on
the Bruhat-Tits tree $\cT$ via $g[f_1, f_2]=[gf_1, gf_2]$. It is
easy to check that this action preserves the distance between any
two vertices, and the induced action on $\partial\cT$ agrees with
the usual action of $\GL_2(K)$ on $\P^1(K)$ through fractional
linear transformations.

Fix the standard basis $e_1=\begin{pmatrix} 1\\ 0 \end{pmatrix}$,
$e_2=\begin{pmatrix} 0\\ 1 \end{pmatrix}$ of $V$, and let $O:=[e_1,
e_2]$. Since $\GL_2(K)$ acts transitively on the vertices of $\cT$
and the stabilizer of $O$ is $\GL_2(\cO)K^\times$, we have a
bijection
\begin{align}\label{eq-map}
\GL_2(K)/\GL_2(\cO)K^\times &\overset{\sim}{\To} \Ver(\cT)\\
g &\nonumber \mapsto g\cdot O.
\end{align}
Using the Iwasawa decomposition, one easily sees that the set of
matrices
\begin{equation}\label{eq-setM}
\left\{\begin{pmatrix} \pi^k & u \\ 0 & 1\end{pmatrix}\ \bigg|\
\begin{matrix} k\in \Z\\ u\in K,\ u\ \mod\ \pi^k\cO\end{matrix}\right\}
\end{equation}
is a set of representatives for $\Ver(\cT)$; see \cite[p.
370]{Improper}. The map (\ref{eq-map}) becomes
$$
\begin{pmatrix} \pi^k & u \\ 0 & 1\end{pmatrix}\mapsto [\pi^k e_1,
ue_1+e_2].
$$
Note that under this bijection the identity matrix corresponds to
$O$. We say that a matrix $M\in \GL_2(K)$ is in \textit{reduced
form} if it belongs to the set of matrices in (\ref{eq-setM}). For
two matrices $M,M'\in\GL_2(K)$, we write $M\sim M'$ if they
represent the same vertex in $\cT$.

\begin{lem}\label{lem2.1}
The distance between $\begin{pmatrix} \pi^k & u \\ 0 &
1\end{pmatrix}$ $($in reduced form$)$ and $\begin{pmatrix} 1 & 0 \\
0 & 1\end{pmatrix}$ is
$$
\left\{
  \begin{array}{ll}
    |k|, & \hbox{if $u=0$ or $\ord(u)\geq 0$;} \\
    k-2\cdot \ord(u), & \hbox{if $u\neq 0$ and $\ord(u)<0$.}
  \end{array}
\right.
$$
$($Note that for a matrix in reduced form $k>\ord(u)$.$)$
\end{lem}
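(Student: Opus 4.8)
The plan is to reinterpret the distance through the theory of elementary divisors. The two vertices in question are $O = [\cO e_1 \oplus \cO e_2]$ and the vertex corresponding to $M = \begin{pmatrix} \pi^k & u \\ 0 & 1 \end{pmatrix}$, namely $[\pi^k e_1,\, u e_1 + e_2] = [\cO \pi^k e_1 \oplus \cO(u e_1 + e_2)]$. Writing $L = \cO e_1 \oplus \cO e_2$, we have $ML = L'$ for a representative $L'$ of the second vertex. The first step is to recall the standard description of the tree metric (see \cite{SerreT}): if $g \in \GL_2(K)$ carries the lattice $L$ to $L'$, then by the elementary divisor theorem one may write $g = u_1\, \mathrm{diag}(\pi^a, \pi^b)\, u_2$ with $u_1, u_2 \in \GL_2(\cO)$ and integers $a \le b$ (possibly negative), and the distance between $[L]$ and $[L']$ equals $b - a$. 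Since scaling a lattice shifts both $a$ and $b$ by the same integer without changing the vertex, $b - a$ is well defined on vertices.

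The second step is to read off the two exponents for $M$. For a nonsingular $2\times 2$ matrix over $K$ they are pinned down by two invariants of the double coset: the smaller exponent is $a = \min_{i,j}\ord(M_{ij})$ (multiplication by $\GL_2(\cO)$ on either side preserves the minimal entry valuation), while $a + b = \ord(\det M)$. Here $\det M = \pi^k$, so $\ord(\det M) = k$, and the entry valuations are $\ord(\pi^k) = k$, $\ord(u)$, $\ord(0) = +\infty$, and $\ord(1) = 0$. Hence the distance is
\[
b - a = \ord(\det M) - 2a = k - 2\min\bigl(k,\ \ord(u),\ 0\bigr).
\]
The remainder is a short case analysis. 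When $u = 0$ or $\ord(u) \ge 0$, the minimum is $\min(k,0)$, and $k - 2\min(k,0)$ equals $|k|$ whether $k \ge 0$ or $k < 0$. When $u \neq 0$ and $\ord(u) < 0$, the reduced-form inequality $\ord(u) < k$ recorded in the statement, together with $\ord(u) < 0$, forces $\min(k,\ \ord(u),\ 0) = \ord(u)$, giving the distance $k - 2\ord(u)$.

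I do not expect a serious obstacle: the entire content is the reduction to the Cartan/elementary-divisor decomposition plus bookkeeping. The only points demanding care are confirming that the formula $b - a$ is the correct metric on the projective tree (so that the overall $\pi$-power ambiguity introduced by passing to lattice classes is harmless), and keeping the valuations straight in the regime $k < 0$, $\ord(u) < 0$, where one must invoke the reduced-form inequality $\ord(u) < k$ to identify which entry realizes the minimal valuation. With these in hand, the two displayed cases follow at once.
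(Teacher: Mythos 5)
Your argument is correct: the reduction to the invariant factors (distance $= b-a$, with $a=\min_{i,j}\ord(M_{ij})$ and $a+b=\ord(\det M)$, both constant on $\GL_2(\cO)$-double cosets) is the standard way to carry out what the paper dismisses as ``an easy calculation,'' and your case analysis, including the use of the reduced-form inequality $k>\ord(u)$ to identify the minimal entry valuation, checks out. Since the paper supplies no details for this lemma, there is nothing further to compare; your write-up simply makes the omitted computation explicit.
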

\begin{proof}
This is an easy calculation.
\end{proof}

Given two distinct points $P,Q\in \P^1(K)$, there is a unique path
in $\cT$, without backtracking and infinite in both directions,
whose ends are $P$ and $Q$; this is the \textit{geodesic} $\cA(P,Q)$
connecting the two boundary points of $\cT$. For example, the
geodesic connecting $0=(0:1)$ and $\infty=(1:0)$ is the subgraph of
$\cT$ with vertices $\left\{\begin{pmatrix} \pi^k & 0 \\ 0 &
1\end{pmatrix}\ \bigg|\ k\in \Z\right\}$.

Assume $\gamma\in \GL_2(K)$ has two distinct $K$-rational
eigenvalues $a$ and $b$. The eigenvectors corresponding to $a$ and
$b$ can be regarded as two well-defined points on $\P^1(K)$ - if
$\begin{pmatrix} x\\ y
\end{pmatrix}$ is an eigenvector, then the corresponding point is $(x:y)$.
Let $\cA(\gamma)$ be the geodesic connecting these points. Suppose
$\begin{pmatrix} x_1\\ y_1 \end{pmatrix}$ and $\begin{pmatrix} x_2\\
y_2 \end{pmatrix}$ are eigenvectors corresponding to $a$ and $b$,
respectively. Since $\begin{pmatrix} x_1 & x_2 \\ y_1 &
y_2\end{pmatrix}$ maps $\infty$ to $(x_1:y_1)$ and $0$ to
$(x_2:y_2)$,
\begin{equation}\label{eq-geod}
\Ver(\cA(\gamma))=\left\{\begin{pmatrix} x_1 & x_2 \\ y_1 &
y_2\end{pmatrix}\begin{pmatrix} \pi^k & 0 \\ 0 & 1\end{pmatrix}\ |\
k\in \Z\right\}.
\end{equation}
The \textit{distance} from $\cA(\gamma)$ to $O$ is the minimum of
the distances from the vertices on $\cA(\gamma)$ to $O$.

\begin{lem}\label{lem-geod} The action of $\gamma$ on $\cT$ induces translation on
$\cA(\gamma)$ of amplitude $|\ord(a)-\ord(b)|$.
\end{lem}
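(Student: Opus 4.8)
The plan is to diagonalize $\gamma$ over $K$ and track its effect on the explicit vertex parametrization of $\cA(\gamma)$ given in (\ref{eq-geod}). Write $P=\begin{pmatrix} x_1 & x_2 \\ y_1 & y_2\end{pmatrix}$ for the matrix whose columns are the eigenvectors for $a$ and $b$. By the defining property of eigenvectors, $\gamma P=P\begin{pmatrix} a & 0 \\ 0 & b\end{pmatrix}$, so $\gamma$ is conjugate over $K$ to the diagonal matrix with entries $a,b$. By (\ref{eq-geod}) the vertices of $\cA(\gamma)$ are $v_k=P\begin{pmatrix}\pi^k & 0\\ 0 & 1\end{pmatrix}\cdot O$ for $k\in\Z$.

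First I would compute $\gamma v_k$. Using $\gamma P=P\begin{pmatrix}a&0\\0&b\end{pmatrix}$ and the fact that diagonal matrices commute, one gets $\gamma v_k=P\begin{pmatrix}a\pi^k & 0\\ 0 & b\end{pmatrix}\cdot O$. The next step is to rewrite the inner diagonal matrix in reduced form modulo the stabilizer of a vertex. Since a lattice class is unchanged under scaling by $K^\times$, dividing by $b$ shows that $\begin{pmatrix}a\pi^k & 0\\0 & b\end{pmatrix}\cdot O$ is the same vertex as $\begin{pmatrix}(a/b)\pi^k & 0\\ 0 & 1\end{pmatrix}\cdot O$. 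Setting $m:=\ord(a)-\ord(b)=\ord(a/b)$, we may write $a/b=\pi^m u$ with $u\in\cO^\times$; the integral unit $\begin{pmatrix}u & 0\\0&1\end{pmatrix}\in\GL_2(\cO)$ fixes $O$, so $\begin{pmatrix}a\pi^k & 0\\0 & b\end{pmatrix}\cdot O=\begin{pmatrix}\pi^{k+m}&0\\0&1\end{pmatrix}\cdot O$. Therefore $\gamma v_k=v_{k+m}$ for every $k$.

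Finally I would conclude. Because $\GL_2(K)$ acts on $\cT$ by isometries and $k\mapsto\begin{pmatrix}\pi^k&0\\0&1\end{pmatrix}\cdot O$ traverses the geodesic from $0$ to $\infty$ through consecutive adjacent vertices (the distances being confirmed by Lemma \ref{lem2.1}), the map $k\mapsto v_k$ enumerates the vertices of $\cA(\gamma)$ so that $v_k$ and $v_{k+1}$ are adjacent and $d(v_k,v_{k'})=|k-k'|$. Thus $\gamma$ shifts the index by the constant $m$, which is exactly a translation of $\cA(\gamma)$ of amplitude $|m|=|\ord(a)-\ord(b)|$; the degenerate case $m=0$ simply means $\gamma$ fixes $\cA(\gamma)$ pointwise.

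The only delicate point is the lattice bookkeeping in the middle step: one must be sure that dividing by $b$ (a $K^\times$-scaling, legitimate at the level of lattice classes) together with absorbing the integral unit $u$ into $\GL_2(\cO)$ really lands $\begin{pmatrix}a\pi^k&0\\0&b\end{pmatrix}\cdot O$ back on the distinguished vertex $v_{k+m}$ of the \emph{same} geodesic, rather than on some neighboring vertex off the axis. This is clean here precisely because everything remains diagonal, so no off-diagonal ($u$-type) entries are generated and the computation never leaves the apartment $\cA(\gamma)$; the amplitude then follows at once from the distance formula along the $0$-$\infty$ geodesic.
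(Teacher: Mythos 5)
Your argument is correct and is exactly the route the paper intends: its proof consists of the single remark that the claim is ``easy to see after choosing the eigenvectors of $\gamma$ as a basis of $V$,'' and your computation (conjugating by $P$, reducing $\mathrm{diag}(a\pi^k,b)$ modulo $K^\times\GL_2(\cO)$ to $\mathrm{diag}(\pi^{k+m},1)$ with $m=\ord(a)-\ord(b)$) is just that remark carried out in full. No gaps.
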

\begin{proof}
This is easy to see after choosing the eigenvectors of $\gamma$ as a
basis of $V$.
\end{proof}

\begin{lem}\label{lem-BB}
Suppose $x_1y_1\neq 0$ and $x_2y_2\neq 0$. Denote $x:=x_1/y_1$ and
$y:=x_2/y_2$. Suppose $\ord(x)\geq B$ and $\ord(y)\geq B$, or
$\ord(x)\leq -B$ and $\ord(y)\leq -B$ for some $B\geq 0$. Then the
distance from $\cA(\gamma)$ to $O$ is at least $B$.
\end{lem}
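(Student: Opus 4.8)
The plan is to reduce the statement to a direct valuation estimate, using the explicit vertices of $\cA(\gamma)$ from (\ref{eq-geod}). Writing
$$
M_k=\begin{pmatrix} x_1 & x_2 \\ y_1 & y_2\end{pmatrix}\begin{pmatrix} \pi^k & 0 \\ 0 & 1\end{pmatrix}=\begin{pmatrix} x_1\pi^k & x_2 \\ y_1\pi^k & y_2\end{pmatrix},
$$
the vertices lying on $\cA(\gamma)$ are exactly the $M_k\cdot O$, $k\in\Z$, so it suffices to show $d(M_k\cdot O,O)\geq B$ for each $k$. The eigenvectors are only defined up to scaling, and replacing $(x_i,y_i)$ by $(cx_i,cy_i)$ leaves the geodesic unchanged while merely shifting the index $k$; I may therefore first rescale each eigenvector to be primitive, i.e.\ $\min(\ord(x_i),\ord(y_i))=0$. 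Then the hypothesis $\ord(x)\geq B$, $\ord(y)\geq B$ becomes $\ord(y_1)=\ord(y_2)=0$ with $\ord(x_1),\ord(x_2)\geq B$, while $\ord(x)\leq -B$, $\ord(y)\leq -B$ becomes $\ord(x_1)=\ord(x_2)=0$ with $\ord(y_1),\ord(y_2)\geq B$.

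The key tool is the formula $d(M\cdot O,O)=\ord(\det M)-2\min_{i,j}\ord(M_{ij})$ for $M\in\GL_2(K)$. It follows from the elementary-divisor decomposition $M=U\begin{pmatrix}\pi^a & 0\\ 0 & \pi^b\end{pmatrix}W$ with $U,W\in\GL_2(\cO)$ and $a\leq b$: since $\GL_2(\cO)$ fixes $O$ and acts isometrically, the distance is $b-a$, where $\pi^a$ is the content (so $a=\min_{i,j}\ord(M_{ij})$) and $a+b=\ord(\det M)$. This is consistent with Lemma \ref{lem2.1}, from which it could alternatively be derived by reducing $M_k$ to the form (\ref{eq-setM}); I prefer the elementary-divisor form because the reduction to (\ref{eq-setM}) would require splitting on the sign of $k$. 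For $M_k$ this gives $\ord(\det M_k)=k+\ord(x_1y_2-x_2y_1)$ and $\min_{i,j}\ord((M_k)_{ij})=\min(\ord(x_1)+k,\ \ord(x_2),\ \ord(y_1)+k,\ \ord(y_2))$.

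Finally I would compute. In both normalized cases the valuations $\ord(x_i),\ord(y_i)$ are arranged so that the minimum of the four quantities above reduces to $\min(k,0)$, and $\Delta:=\ord(x_1y_2-x_2y_1)\geq\min(\ord(x_1y_2),\ord(x_2y_1))\geq B$. Hence
$$
d(M_k\cdot O,O)=k+\Delta-2\min(k,0)=\Delta+|k|\geq\Delta\geq B,
$$
proving the claim (with minimum distance $\Delta$, attained at $k=0$). I expect the only real work to be the bookkeeping of the first two steps—checking that the scaling normalization is harmless and establishing the distance formula; once the valuations are normalized, the two hypotheses collapse to the same one-line estimate, so there is no deeper obstacle.
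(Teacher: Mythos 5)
Your proof is correct, and it takes a genuinely different route from the paper's. The paper normalizes the eigenvectors so that $y_1=y_2=1$, writes each vertex of $\cA(\gamma)$ as $\begin{pmatrix} x\pi^k & y\\ \pi^k & 1\end{pmatrix}$, and reduces it by hand (right multiplication by matrices in $\GL_2(\cO)$) to the standard form (\ref{eq-setM}) so that Lemma \ref{lem2.1} applies; this forces a case split on the sign of $k$ and further sub-cases on $\ord(x)$ versus $\ord(y)$ and their signs, occupying most of the proof. You instead normalize each eigenvector to be primitive and invoke the invariant-factor distance formula $d(M\cdot O,O)=\ord(\det M)-2\min_{i,j}\ord(M_{ij})$, whose justification via Smith normal form is correct and which is indeed consistent with Lemma \ref{lem2.1} in all three of its cases. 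With that formula the two hypothesis branches collapse to the single estimate $\ord(x_1y_2-x_2y_1)\geq B$ (note $x_1y_2-x_2y_1\neq 0$ since the eigenvectors are independent), and you even get the sharper conclusion that the distance at the $k$-th vertex of (\ref{eq-geod}) is exactly $|k|+\ord(x_1y_2-x_2y_1)$, so the minimum distance is attained at $k=0$ and equals $\ord(x_1y_2-x_2y_1)$. The only price is having to establish the distance formula, which is standard; in exchange you eliminate all of the paper's casework. Both arguments are sound; yours is the cleaner and more informative one.
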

\begin{proof} Note that $x\neq y$, as $\begin{pmatrix} x\\ 1
\end{pmatrix}$ and $\begin{pmatrix} y\\ 1 \end{pmatrix}$ are
eigenvectors for $a$ and $b$. Without loss of generality we can
assume $\ord(x)\geq \ord(y)$. The geodesic $\cA(\gamma)$ has
vertices
$$
\begin{pmatrix} x\pi^k & y \\ \pi^k & 1\end{pmatrix}, \quad k\in \Z.
$$
First, consider the case when $k>0$. Then $\begin{pmatrix} 1 & 0 \\
-\pi^k & 1\end{pmatrix}\in \GL_2(\cO)$, so by multiplying
$\begin{pmatrix} x\pi^k & y \\ \pi^k & 1\end{pmatrix}$ by this
matrix from the right we see that
$$
\begin{pmatrix} x\pi^k & y \\ \pi^k &
1\end{pmatrix} \sim \begin{pmatrix} (x-y)\pi^k & y \\ 0 &
1\end{pmatrix}.
$$
On the other hand, $\ord((x-y)\pi^k)\geq k+\ord(y)>\ord(y)$, so the
resulting matrix is in reduced form. The distance from this matrix
to $O$ is
$$
\ord((x-y)\pi^k)\geq k+\ord(y), \quad \text{if }\ord(y)\geq 0,
$$
and
$$
\ord((x-y)\pi^k)-2\ord(y)\geq k-\ord(y), \quad \text{if }\ord(y)< 0.
$$
In either case, we conclude that the distance is at least
$1+|\ord(y)|\geq 1+B$.

The matrix with $k=0$ is adjacent to the matrix with $k=1$, so from
the previous paragraph we conclude that the corresponding matrix and
$O$ are at a distance at least $(1+B)-1=B$.

Now suppose $k<0$. Then
$$
\begin{pmatrix} x\pi^k & y \\ \pi^k &
1\end{pmatrix} \sim \begin{pmatrix} x & \pi^{-k}y \\ 1 &
\pi^{-k}\end{pmatrix}\sim \begin{pmatrix} \pi^{-k}(y-x) & x \\ 0 &
1\end{pmatrix}.
$$

If $\ord(x)=\ord(y)$, then $\ord(x-y)\geq \ord(y)=\ord(x)$. Hence
$-k+\ord(x-y)>\ord(x)$, and the above matrix is in reduced form. The
distance from $O$ is
$$
-k+\ord(x-y)\geq -k+\ord(y)>B, \quad \text{if }\ord(y)\geq 0,
$$
or
$$
-k+\ord(x-y)-2\ord(y)\geq -k-\ord(y)>B, \quad \text{if }\ord(y)< 0.
$$

If $\ord(x)>\ord(y)$, then $\ord(x-y)= \ord(y)$. If
$\ord(\pi^{-k}(x-y))=-k+\ord(y)\leq \ord(x)$, then
$$
\begin{pmatrix} \pi^{-k}(y-x) & x \\ 0 &
1\end{pmatrix}\sim \begin{pmatrix} \pi^{-k}(y-x) & 0 \\ 0 &
1\end{pmatrix},
$$
so the distance is $|-k+\ord(y)|$. If $\ord(y)\geq B$, then this
last quantity is obviously $\geq 1+B$. On the other hand, if
$\ord(y)\leq -B$, then, due to the assumption of the lemma,
$-k+\ord(y)\leq \ord(x)\leq -B$. Thus, $|-k+\ord(y)|\geq B$ Finally,
if $-k+\ord(y)> \ord(x)$, then the distance is
$$
-k+\ord(x-y)= -k+\ord(y)>B, \quad \text{if }\ord(x)\geq 0,
$$
or
$$
-k+\ord(x-y)-2\ord(x)= -k+\ord(y)-2\ord(x)>-\ord(x)\geq B,
$$
if $\ord(x)< 0$.
\end{proof}

\begin{rem}
The inequalities $\ord(x)\gg 0$, $\ord(y)\gg 0$ (resp. $\ord(x)\ll
0$, $\ord(y)\ll 0$) essentially mean that both $x$ and $y$ are in a
small neighborhood of $0$ (resp. $\infty$). Now one can visualize
the previous lemma as follows: for a sufficiently small interval on
$\R$ the geodesic in the hyperbolic upper half-plane $\cH$
connecting any two distinct points in that interval is far from a
fixed point in $\cH$.
\end{rem}

\begin{notn} For $B\geq 0$, let $\cT_B$ be the finite subtree of $\cT$ with
set of vertices
$$
\Ver(\cT_B)=\{v\in \Ver(\cT)\ |\ d(v,O)\leq B\}.
$$
\end{notn}

\begin{lem}\label{lem-empty} Let $n$ be the amplitude of translation
with which $\gamma$ acts on $\cA(\gamma)$. Let $m$ be the distance
from $\cA(\gamma)$ to $O$. If $2m+n> 2B$, then $\gamma\cT_B\cap
\cT_B=\emptyset$.
\end{lem}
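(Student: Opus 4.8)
The plan is to prove the contrapositive: if $\gamma\cT_B\cap\cT_B\neq\emptyset$, then $2m+n\leq 2B$. I would work entirely with the combinatorial metric $d$ on $\cT$, using only that $\cA:=\cA(\gamma)$ is a $\gamma$-invariant geodesic along which $\gamma$ translates with amplitude $n$ (Lemma \ref{lem-geod}); in particular $d(P,\gamma P)=n$ for every vertex $P$ on $\cA$. First I would unwind the hypothesis. A vertex in $\gamma\cT_B\cap\cT_B$ has the form $\gamma v$ with $v\in\Ver(\cT_B)$ and $\gamma v\in\Ver(\cT_B)$, so there is a single vertex $v$ with
$$
d(O,v)\leq B\qquad\text{and}\qquad d(O,\gamma v)\leq B .
$$
Everything then reduces to bounding the sum $d(O,v)+d(O,\gamma v)$ from below in terms of $m$ and $n$.

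The key step is the additive distance formula in a tree relative to the geodesic $\cA$. Write $p:=\pr_{\cA}(v)$ and $q:=\pr_{\cA}(O)$ for the nearest-vertex projections onto $\cA$, so $d(O,q)=m$ by definition of $m$. Since $\gamma$ is an isometry preserving $\cA$, projection commutes with $\gamma$, giving $\pr_{\cA}(\gamma v)=\gamma p$ and $d(\gamma v,\gamma p)=d(v,p)$. Because the geodesic from any vertex to $\cA$ meets $\cA$ exactly at its projection, the concatenated paths $O\to q\to p\to v$ and $O\to q\to\gamma p\to\gamma v$ have no backtracking, so
$$
d(O,v)=m+d(q,p)+d(v,p),\qquad d(O,\gamma v)=m+d(q,\gamma p)+d(v,p).
$$
Adding these, and using the triangle inequality $d(q,p)+d(q,\gamma p)\geq d(p,\gamma p)=n$ together with $d(v,p)\geq 0$, I obtain
$$
2B\ \geq\ d(O,v)+d(O,\gamma v)\ =\ 2m+d(q,p)+d(q,\gamma p)+2\,d(v,p)\ \geq\ 2m+n,
$$
which is exactly the contrapositive of the asserted statement.

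The only point that needs care is justifying the two displayed distance formulas and the identity $d(p,\gamma p)=n$: one must verify that $q,p$ (resp. $q,\gamma p$) genuinely lie on the geodesic from $O$ to $v$ (resp. to $\gamma v$), i.e. that no cancellation occurs at the projection vertices. This is precisely the defining property of a tree (uniqueness of geodesics, so that a vertex joins to $\cA$ only through its projection), and since $n\geq 1$ for a hyperbolic $\gamma$ the points $p$ and $\gamma p$ are distinct, so the middle segment along $\cA$ is actually traversed and contributes its full length $n$. I expect this to be the main (though routine) obstacle; once the additive formula is in place, the rest is a single application of the triangle inequality. The cases implicitly distinguished by whether $q$ lies between $p$ and $\gamma p$ on $\cA$ or outside that segment are handled uniformly by this argument, with equality $d(q,p)+d(q,\gamma p)=n$ occurring precisely in the ``between'' case.
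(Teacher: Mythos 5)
Your overall strategy is sound and, after unwinding, is essentially the paper's argument in disguise: the quantity $d(O,v)+d(O,\gamma v)$ you bound below by $2m+n$ is exactly what the paper controls by writing $2m+n=d(O,\gamma O)\le d(O,\gamma v)+d(\gamma v,\gamma O)$. However, there is a genuine (if repairable) flaw in your key step. The displayed formula $d(O,v)=m+d(q,p)+d(v,p)$ is \emph{false} in the degenerate case $p=q$: if $v$ and $O$ project to the same vertex $q$ of $\cA$, the ``concatenated path'' $O\to q\to p\to v$ has an empty middle segment, and the geodesics from $O$ to $q$ and from $q$ to $v$ may leave $q$ along the same edge off $\cA$, so the concatenation backtracks. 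Concretely, take $u$ adjacent to $q$ but not on $\cA$, and $O,v$ two distinct neighbours of $u$ other than $q$: then $m=d(v,q)=2$ but $d(O,v)=2\neq 2+0+2$. Your stated justification (``a vertex joins to $\cA$ only through its projection'') only rules out backtracking at $q$ when the middle segment along $\cA$ is nonempty, i.e.\ when $p\neq q$; the case analysis you mention at the end (where $q$ sits relative to $p$ and $\gamma p$) does not cover this degeneration. The final inequality $d(O,v)+d(O,\gamma v)\ge 2m+n$ does survive — since $n\ge 1$, at most one of $p,\gamma p$ can equal $q$, and a separate computation with the median of $O,v,q$ recovers the bound — but as written the proof asserts an identity that can fail.

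The paper sidesteps this entirely: it applies the additive tree identity only to $d(O,\gamma O)$, where the two projections $P=\pr_{\cA}(O)$ and $\gamma P=\pr_{\cA}(\gamma O)$ are automatically distinct (amplitude $n\ge 1$), giving $d(O,\gamma O)=2m+n$ by \cite[Prop.~24(iv), p.~63]{SerreT}; it then notes that a common vertex $w\in\gamma\cT_B\cap\cT_B$ yields $d(O,\gamma O)\le d(O,w)+d(w,\gamma O)\le 2B$ since $\gamma$ is an isometry. If you reorganize your argument this way — decompose $d(O,\gamma O)$ rather than $d(O,v)$ and $d(O,\gamma v)$ separately, and finish with one triangle inequality — the degenerate case never arises and no projection bookkeeping is needed.
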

\begin{proof} Let $P\in \cA(\gamma)$ be the vertex closest to $O$.
Then $m=d(P,O)$ and
$$
d(O, \gamma O)=d(O,P)+d(P, \gamma P)+d(\gamma P, \gamma O)=2m+n,
$$
cf. \cite[Prop. 24 (iv), p. 63]{SerreT}. On the other hand, if
$\gamma\cT_B\cap \cT_B\neq \emptyset$, then $d(O, \gamma O)\leq 2B$.
Thus $2m+n\leq 2B$.
\end{proof}

\begin{prop}\label{lemDiam}
Assume $\G$ acts without inversion on $\cT$ and $\G\bs \cT$ is
finite. Let $B=D(\G\bs \cT)$. Let $S$ denote the set of $\gamma\in
\G$ such that $\gamma\cT_B\cap \cT_B\neq \emptyset$. Then $S$
generates $\G$.
\end{prop}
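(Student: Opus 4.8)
The plan is to run the standard Bass--Serre connectedness argument: set $\G'=\langle S\rangle$, the subgroup generated by $S$, and show $\G'=\G$ by exploiting that the ball $\cT_B$ tiles $\cT$ under $\G$. Concretely, I would consider the subgraph
$$
Y=\bigcup_{g\in \G'} g\,\cT_B\subseteq \cT,
$$
which is stable under $\G'$ and contains $\cT_B$ (since $1\in \G'$). The goal becomes to prove $Y=\cT$; once this is known, any $\gamma\in \G$ satisfies $\gamma O\in Y$, so $\gamma O\in g\,\cT_B$ for some $g\in \G'$, whence $d(O,g^{-1}\gamma O)\le B$. Then $O$ lies in both $\cT_B$ and $(g^{-1}\gamma)\cT_B$, because $d(O,(g^{-1}\gamma)^{-1}O)=d(\gamma O,gO)\le B$; hence $g^{-1}\gamma\in S\subseteq \G'$, and therefore $\gamma=g(g^{-1}\gamma)\in \G'$.

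The first substantial input is a covering statement: every vertex, and in fact every edge, of $\cT$ is $\G$-equivalent to one lying in $\cT_B$. For a vertex $v$ the combinatorial distance in $\G\bs\cT$ satisfies $d_{\G\bs\cT}(\bar O,\bar v)=\min_{\gamma\in\G} d(O,\gamma v)$, and since $B=D(\G\bs\cT)$ this minimum is $\le B$; hence some $\gamma v\in \cT_B$, i.e.\ $\G\cdot\Ver(\cT_B)=\Ver(\cT)$. To promote this to edges I would lift geodesics: an edge of $\G\bs\cT$ joining $\bar O$ to a given class lifts, edge by edge, to a path in $\cT$ issuing from $O$ (the edges of $\cT$ out of a vertex surject onto those out of its image), so that a whole edge of $\cT$ can be carried into $\cT_B$. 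This is what would give $\G\,\cT_B=\cT$ as graphs.

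With the covering in hand I would prove $Y=\cT$ by connectedness of $\cT$. If $Y\neq\cT$, there is an edge $e$ with $o(e)\in Y$ but $t(e)\notin Y$. Using $\G'$-stability of $Y$, translate so that $o(e)\in\cT_B$. By the edge-covering there is $g\in\G$ with $e\in g\,\cT_B$; then $o(e)\in \cT_B\cap g\,\cT_B$, so this intersection is nonempty and $g\in S\subseteq \G'$. But then $t(e)\in g\,\cT_B\subseteq Y$, contradicting $t(e)\notin Y$. Hence $Y=\cT$, and the concluding step above yields $\G=\G'$.

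The step I expect to be the real obstacle is the edge-covering $\G\,\cT_B=\cT$, rather than the merely vertex-level statement. The naive approach---take a group element carrying one endpoint of $e$ into $\cT_B$---can leave the other endpoint at distance $B+1$ from $O$, so the edge protrudes from the ball by one; the diameter bound controls each vertex individually but not an adjacent pair simultaneously. The delicate point is therefore to lift a minimal path in the quotient, keeping the moving endpoints within the radius dictated by $B$, so that an entire edge (not just its vertices) is brought inside $\cT_B$. This is precisely what makes $\cT_B\cap g\,\cT_B$ nonempty in the connectedness step, and it is where the equality $B=D(\G\bs\cT)$ must be used sharply.
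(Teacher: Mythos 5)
Your overall strategy --- reduce to showing that the $\langle S\rangle$-translates of $\cT_B$ cover $\cT$, then run a connectedness argument --- is the combinatorial form of exactly what the paper does: the paper passes to $\mathrm{real}(\cT)$, takes the open $1/3$-neighborhood $U$ of $\mathrm{real}(\cT_B)$, and invokes the topological generation criterion from Serre's \emph{Trees} (p.~30). Your reduction step and your connectedness step are both correct as written, and the vertex-level covering $\G\cdot\Ver(\cT_B)=\Ver(\cT)$ is proved the same way the paper proves it (lifting a geodesic from $\bar{O}$).

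The gap is the one you yourself flag and then do not close: the claim that every \emph{edge} of $\cT$ is $\G$-equivalent to an edge of $\cT_B$. Your geodesic-lifting sketch only produces edges lying on paths of length $\le B$ issuing from $O$, i.e.\ it handles edges of $\G\bs\cT$ joining a vertex at distance $k$ to one at distance $k+1\le B$ from $\bar{O}$. It says nothing about an edge of $\G\bs\cT$ both of whose endpoints are at distance exactly $B=D(\G\bs\cT)$ (in particular a loop at such a vertex): every lift of such an edge with one endpoint in $\cT_B$ can have its other endpoint at distance $B+1$, and then in your connectedness step the triangle inequality only gives $d(O,gO)\le 2B+1$, which does not force $\cT_B\cap g\cT_B\neq\emptyset$. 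This is a genuine obstruction, not a presentational one: a free group of rank $(q+1)/2$ acting freely on the $(q+1)$-regular tree has quotient a single vertex with loops, so $B=0$, $S=\{1\}$, and $S$ does not generate --- the edge-covering claim (and hence the proposition in this abstract generality) really requires either replacing $B$ by $B+1$ or an additional hypothesis. Since you explicitly defer ``the delicate point,'' the proposal is not a complete proof. (For what it is worth, the paper's own proof asserts the surjectivity of $U\to\mathrm{real}(\G\bs\cT)$, which is precisely the same edge-covering statement, without further justification; in the application the issue is harmless, since using $\cT_{B+1}$ instead of $\cT_B$ only shifts the constants in Theorem~\ref{thm1}.)
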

\begin{proof} Let $\bar{O}\in \G\bs \cT$ be the image of $O$. Let
$\bar{v}\in \Ver(\G\bs \cT)$. Consider a path $P$ of shortest length
connecting $\bar{O}$ and $\bar{v}$. Obviously $P$ is a subtree of
$\G\bs \cT$, hence by \cite[Prop. 14, p. 25]{SerreT} lifts to $\cT$.
This implies that there is a vertex $v$ in $\cT$ which maps to
$\bar{v}$ and $d(O,v)=d(\bar{O},\bar{v})$. In particular, $v\in
\cT_B$, so $\cT_B$ surjects onto $\G\bs \cT$ under the quotient map
$\cT\to \G\bs \cT$.

Let $\mathrm{real}(\cT)$ be the \textit{realization} of $\cT$; see
\cite[p. 14]{SerreT}. Recall that $\mathrm{real}(\cT)$ is a
CW-complex where each edge of $\cT$ is homeomorphic to the interval
$[0,1]\subset \R$. Let $U$ be the open subset of
$\mathrm{real}(\cT)$ consisting of points at distance $< 1/3$ from
$\mathrm{real}(\cT_B)$. Then $\gamma U\cap U\neq \emptyset$ if and
only if $\gamma\cT_B\cap \cT_B$, and $U\to \mathrm{real}(\G\bs \cT)$
is surjective. The claim of the proposition now follows from
\cite[(1), p. 30]{SerreT}.
\end{proof}

Let $\G$ be as in Proposition \ref{prop2.5}. $\G$ acts naturally on
$\cT$ (see Section \ref{SecHE}). The action is without inversion and
the quotient graph $\G\bs\cT$ is finite; see \cite[Lem.
5.1]{PapLocProp}.

\begin{lem}\label{lem3.7}
Let $V:=\#\Ver(\G\bs \cT)$.
\begin{enumerate}
\item If $\Odd(R)=1$, then
$$
V= \frac{2}{(q-1)(q^2-1)}\prod_{x\in
R}(q^{\deg(x)}-1)+\frac{q}{q+1}2^{\#R-1}.
$$
\item If $\Odd(R)=0$, then
$$
V= \frac{2(q^{\deg(\fa)}+1)}{(q-1)(q^2-1)}\prod_{x\in
R}(q^{\deg(x)}-1).
$$
\end{enumerate}
\end{lem}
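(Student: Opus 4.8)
The plan is to read off $V$ from the group-theoretic structure of $\bg:=\G/\F_q^\times$ provided by Proposition \ref{prop2.5}, exploiting that $\bg$ acts effectively and without inversion on the $(q+1)$-regular tree $\cT$ with finite quotient, so that by Bass--Serre theory $\bg$ is the fundamental group of the graph of groups $\bg\bs\cT$. Writing $E$ for the number of geometric edges of $\bg\bs\cT$, the two numerical inputs are the Euler--Poincar\'e relation for the connected quotient,
$$
V-E=1-b_1(\bg\bs\cT),
$$
and the local regularity identity: the $q+1$ edges issuing from a lift of a vertex $\bar v$ break into $\bg_{\bar v}$-orbits, one per edge of $\bg\bs\cT$ at $\bar v$, of sizes $[\bg_{\bar v}:\bg_{\bar e}]$, so that $\sum_{\bar e\ni\bar v}[\bg_{\bar v}:\bg_{\bar e}]=q+1$. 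Everything reduces to feeding the stabilizer data into these two relations.

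I would first dispose of part (2), where $\Odd(R)=0$. By Proposition \ref{prop2.5}(1) the group $\bg$ is torsion-free, hence acts freely on $\cT$, and $\bg\bs\cT$ is an honest finite $(q+1)$-regular graph with $E=\tfrac{q+1}{2}V$ and $b_1(\bg\bs\cT)$ equal to the free rank of $\bg$, namely $1+(q^{\deg(\fa)}+1)(g(R)-1)$ by Proposition \ref{prop2.5}(3). Substituting $E=\tfrac{q+1}{2}V$ into $b_1=E-V+1=\tfrac{q-1}{2}V+1$ and using $g(R)-1=\tfrac{1}{q^2-1}\prod_{x\in R}(q^{\deg(x)}-1)$ yields formula (2) after routine simplification.

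The case $\Odd(R)=1$, which is part (1), is the heart of the matter, since now vertex stabilizers intervene. By Proposition \ref{prop2.5}(1) the group $\bg$ has torsion, and by Bass--Serre theory it decomposes as a free product $F_{b_1}\ast C_1\ast\cdots\ast C_s$ in which the $C_i$ are the nontrivial vertex groups (so the edge groups are trivial) and $b_1=E-V+1$. The two structural facts I must pin down are: (i) each nontrivial stabilizer is cyclic of order exactly $q+1$, and (ii) there are exactly $s=2^{\#R-1}$ of them. For (i), a torsion class in $\bg$ is represented by an elliptic unit, hence lies in a conjugate of $\F_{q^2}^\times$ embedded in $D$; such an embedding of the constant-field extension exists \emph{precisely because} $\Odd(R)=1$ forces every place of $R$ to be non-split in $\F_{q^2}(T)/F$, and since $\F_{q^2}^\times/\F_q^\times\cong C_{q+1}$ the maximal finite subgroups are cyclic of order $q+1$ and every nontrivial stabilizer is one of them. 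For (ii), the number of conjugacy classes of these maximal cyclic subgroups is the relevant optimal-embedding number $2^{\#R-1}$, which is exactly the torsion contribution recorded in Proposition \ref{prop2.5}(2); combined with the minimal generator count $g(R)+2^{\#R-1}$ this forces $b_1=g(R)$ (consistent with the standard order being maximal here, as $\fa=\xi$ is a non-square constant, cf.\ Definition \ref{defSO}).

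With (i) and (ii) in hand the bookkeeping is forced. Since edge stabilizers are trivial, a vertex with stabilizer of order $q+1$ carries a single edge of the quotient while a vertex with trivial stabilizer carries $q+1$, so counting oriented edges gives
$$
E=\frac{q+1}{2}\left(\bigl(V-2^{\#R-1}\bigr)+\frac{2^{\#R-1}}{q+1}\right).
$$
Feeding this together with $b_1=g(R)$ into $b_1=E-V+1$ yields
$$
(q-1)V=2\bigl(g(R)-1\bigr)+q\cdot 2^{\#R-1},
$$
and substituting $g(R)-1=\tfrac{1}{q^2-1}\prod_{x\in R}(q^{\deg(x)}-1)-\tfrac{q}{q+1}2^{\#R-1}$ recovers formula (1); as an internal check, the excess $V-\sum_{\bar v}|\bg_{\bar v}|^{-1}=\tfrac{q}{q+1}2^{\#R-1}$ is precisely the difference between $V$ and the first summand of (1). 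The main obstacle is thus not the arithmetic but steps (i)--(ii): identifying every nontrivial stabilizer as a copy of $C_{q+1}$ and counting their conjugacy classes as $2^{\#R-1}$, which is where the ramification hypothesis $\Odd(R)=1$ and the theory of optimal embeddings genuinely enter, with the Euler-characteristic and regularity accounting above doing the rest.
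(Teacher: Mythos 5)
Your part (2) is correct and is exactly the paper's argument: freeness of $\bg:=\G/\F_q^\times$ forces the quotient to be an honest $(q+1)$-regular graph, $E=\tfrac{q+1}{2}V$, and Serre's formula $\mathrm{rank}=E-V+1$ combined with Proposition \ref{prop2.5}(3) gives the formula. No issues there.

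Part (1) is where your proposal has a genuine gap. The paper does not prove (1) at all in this style -- it simply cites \cite[Thm.~5.5]{PapLocProp} -- and the content of that external theorem is precisely the two facts you label (i) and (ii), which your argument asserts rather than establishes. For (i), what you show is that every nontrivial torsion element lies in a conjugate of $\F_{q^2}^\times/\F_q^\times\cong C_{q+1}$; you still need that every nontrivial vertex stabilizer is \emph{exactly} such a group (not a proper subgroup, which follows from $\F_q[h]=\F_{q^2}\subseteq\La$ for any elliptic $h\notin\F_q$, and not something larger such as a dihedral extension, which must be excluded), and that the fixed-point set of such a subgroup on $\cT$ is a single vertex, so that edge stabilizers are trivial and conjugacy classes of maximal finite subgroups biject with elliptic vertices of $\bg\bs\cT$. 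These points are fixable. The irreparable step is (ii): the count $s=2^{\#R-1}$ is an Eichler optimal-embedding/class-number computation, and you cannot extract it from Proposition \ref{prop2.5}(2) as you claim. That proposition only says $\G$ \emph{can be generated by} $g(R)+2^{\#R-1}$ elements; via Grushko this yields the single inequality $b_1+s\le g(R)+2^{\#R-1}$, which cannot ``force'' the two separate equalities $b_1=g(R)$ and $s=2^{\#R-1}$ that your Euler-characteristic bookkeeping requires. (Indeed, the values you feed in are consistent with the target formula, but deriving them from the formula you are trying to prove would be circular.) Your accounting identity
$$
(q-1)V=2\bigl(g(R)-1\bigr)+q\cdot 2^{\#R-1}
$$
and the ensuing arithmetic are correct \emph{given} (i) and (ii), so the proposal reduces Lemma \ref{lem3.7}(1) to the cited embedding-number computation rather than proving it.
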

\begin{proof}
(1) This follows from \cite[Thm. 5.5]{PapLocProp}. (2) Since
$\F_q^\times\lhd \G$ acts trivially on $\cT$ and $\G/\F_q^\times$ is
a free group, $\G\bs \cT$ is $(q+1)$-regular. Thus, if we denote by
$E$ the number of (non-oriented) edges of $\G\bs \cT$, then
$E=(q+1)V/2$. On the other hand, by \cite[Thm. $4'$, p. 27]{SerreT},
the rank of $\G/\F_q^\times$ is equal to $E+1-V$. Now the expression
for $V$ follows from Proposition \ref{prop2.5}.
\end{proof}

\begin{lem}\label{LPS} Let $\cG$ be an $m$-regular Ramanujan graph on $n$
vertices. Then $$D(\cG)\leq 2\log_{m-1}(n)+\log_{m-1}(4).$$
\end{lem}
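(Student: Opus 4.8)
The plan is to use the standard spectral method for bounding the diameter of a Ramanujan graph via Chebyshev polynomials. The starting point is a purely combinatorial observation. Let $A$ be the adjacency matrix of $\cG$, a symmetric $n\times n$ matrix, so that $(A^j)_{vw}$ is the number of walks of length $j$ from $v$ to $w$ and vanishes whenever $j<d(v,w)$. Hence for any polynomial $P(x)=\sum_{j=0}^k c_jx^j$ of degree $k$ we have $(P(A))_{vw}=\sum_{j=0}^k c_j(A^j)_{vw}=0$ as soon as $d(v,w)>k$. Consequently, to conclude $d(v,w)\le k$ for all $v,w$ — and therefore $D(\cG)\le k$ — it suffices to exhibit one degree-$k$ polynomial $P$ with $(P(A))_{vw}\neq 0$ for every pair $v,w$.

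The polynomial I would use is the rescaled Chebyshev polynomial. Let $C_k$ denote the Chebyshev polynomial of the first kind, normalized by $C_k(\cos\theta)=\cos(k\theta)$, and set $P(x)=C_k\!\big(x/(2\sqrt{m-1})\big)$; this has degree $k$ and satisfies $\sup_{|x|\le 2\sqrt{m-1}}|P(x)|=1$. Write the eigenvalues of $A$ as $m=\mu_0>\mu_1\ge\cdots\ge\mu_{n-1}\ge -m$ with orthonormal eigenbasis $\phi_0=\mathbf{1}/\sqrt n,\phi_1,\dots,\phi_{n-1}$, so that $(P(A))_{vw}=\frac{P(m)}{n}+\sum_{i\ge 1}P(\mu_i)(\phi_i)_v(\phi_i)_w$. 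Since $\cG$ is Ramanujan, $|\mu_i|\le 2\sqrt{m-1}$ for $i\ge 1$, whence $|P(\mu_i)|\le 1$; together with Cauchy--Schwarz and the identity $\sum_i(\phi_i)_v^2=1$ (orthogonality of the eigenvector matrix) this bounds the error term by $\max_{i\ge1}|P(\mu_i)|\cdot\big(\sum_i(\phi_i)_v^2\big)^{1/2}\big(\sum_i(\phi_i)_w^2\big)^{1/2}\le 1$. Thus $(P(A))_{vw}\ge \frac{P(m)}{n}-1$.

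It remains to bound $P(m)=C_k\!\big(m/(2\sqrt{m-1})\big)$ from below. For $y\ge 1$ one has $C_k(y)=\tfrac12\big[(y+\sqrt{y^2-1})^k+(y-\sqrt{y^2-1})^k\big]$, and with $y=\frac{m}{2\sqrt{m-1}}$ a direct computation gives $y^2-1=\frac{(m-2)^2}{4(m-1)}$, hence $y+\sqrt{y^2-1}=\frac{(m)+(m-2)}{2\sqrt{m-1}}=\sqrt{m-1}$. Therefore $P(m)=\tfrac12\big((m-1)^{k/2}+(m-1)^{-k/2}\big)>\tfrac12(m-1)^{k/2}$. Feeding this back, $(P(A))_{vw}>0$ holds as soon as $\tfrac12(m-1)^{k/2}\ge n$, i.e. $(m-1)^k\ge 4n^2$, i.e. $k\ge 2\log_{m-1}(n)+\log_{m-1}(4)$. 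For any such $k$ every pair of vertices lies within distance $k$, which gives the stated bound on $D(\cG)$.

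The one point needing care is a bipartite $\cG$, where $-m$ is also an eigenvalue and all walks between two vertices share a fixed parity, so $(A^j)_{vw}$ — and possibly $(P(A))_{vw}$ — can vanish for the wrong parity of $k$. The fix is to keep the $\mu=-m$ term: since $P(-m)=(-1)^kP(m)$ and the corresponding eigenvector takes values $\pm1/\sqrt n$, the trivial contribution becomes $\frac{P(m)}{n}\big(1+(-1)^k\chi\big)$ with $\chi=\pm1$ according to whether $v,w$ lie in the same part. Because same-part (resp. different-part) vertices have even (resp. odd) distance, choosing the parity of $k$ to match that of $d(v,w)$ forces $(-1)^k\chi=1$, so the two trivial terms reinforce to $\frac{2P(m)}{n}$ and the estimate goes through unchanged. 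This parity bookkeeping, together with the rounding of $k$ up to an integer, is the only slack in the argument and is exactly what the additive constant $\log_{m-1}(4)$ in the statement is there to absorb; I expect this routine bookkeeping, rather than any analytic difficulty, to be the main obstacle.
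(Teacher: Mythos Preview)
Your argument is the standard Chebyshev-polynomial spectral bound (essentially Chung's proof, as reproduced in Lubotzky's book), and it is correct. The paper itself does not supply a proof: it simply cites Proposition~7.3.11 of \cite{Lubotzky}, so you are in fact giving more than the paper does, and what you give is precisely the argument behind that cited proposition.

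One small remark on the bookkeeping you flag at the end: your main estimate already consumes the $\log_{m-1}(4)$ (it arises from $P(m)>\tfrac12(m-1)^{k/2}$ and the requirement $P(m)>n$), so it is not quite available a second time to absorb the integer rounding of $k$ or the bipartite parity shift. A cleaner way to close the loop is to argue contrapositively: if $D=d(v,w)$, apply your inequality with $k=D-1$ to force $\tfrac12(m-1)^{(D-1)/2}<P(m)\le n$, whence $D-1<2\log_{m-1}(n)+\log_{m-1}(4)$; since $D$ is an integer this yields $D\le 2\log_{m-1}(n)+\log_{m-1}(4)$ on the nose (and the bipartite case is handled the same way after fixing the parity of $k$). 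This is a cosmetic adjustment, not a gap.
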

\begin{proof}
For the definition of Ramanujan graphs see \cite{Lubotzky}. The
claim of the lemma is part of Proposition 7.3.11 in \textit{loc.
cit.}
\end{proof}

\begin{prop}\label{prop3.9}
$D(\G\bs\cT)\leq 2\deg(\fa\fr)+3$.
\end{prop}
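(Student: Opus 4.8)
The plan is to bound the diameter $D(\G\bs\cT)$ by exploiting the fact, asserted in the paragraph preceding Lemma~\ref{LPS}, that $\G\bs\cT$ is covered by a Ramanujan graph, and then to combine the logarithmic bound of Lemma~\ref{LPS} with the explicit vertex count of Lemma~\ref{lem3.7}. The first step is to pin down precisely which Ramanujan graph covers $\G\bs\cT$. Since $D$ splits at $\infty$ but ramifies at the places of $R$, the quotient $\G\bs\cT$ is an arithmetic quotient of the Bruhat--Tits tree, and by the work of Lubotzky--Phillips--Sarnak and its function-field analogues such quotients are Ramanujan (or are covered by Ramanujan graphs) when the underlying automorphic representations satisfy the Ramanujan conjecture, which in the function-field setting is a theorem of Drinfeld. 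I would first argue that $\G\bs\cT$ is $(q+1)$-regular --- this is essentially done in the proof of Lemma~\ref{lem3.7}(2), where $\F_q^\times$ acts trivially and $\G/\F_q^\times$ is free --- and then invoke the Ramanujan property, so that Lemma~\ref{LPS} applies with $m=q+1$.

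\medskip

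Once the Ramanujan property is in hand, the computation is mechanical. I would set $n=V=\#\Ver(\G\bs\cT)$ and $m=q+1$, so that Lemma~\ref{LPS} gives
$$
D(\G\bs\cT)\leq 2\log_q(V)+\log_q(4).
$$
The task then reduces to bounding $\log_q(V)$ using the formulas of Lemma~\ref{lem3.7}. In both cases ($\Odd(R)=0$ and $\Odd(R)=1$) the dominant term of $V$ is governed by $q^{\deg(\fa)}\prod_{x\in R}(q^{\deg(x)-1})$; since $\fr=\prod_{x\in R}f_x$ has degree $\sum_{x\in R}\deg(x)$, the product $\prod_{x\in R}(q^{\deg(x)}-1)$ is at most $q^{\deg(\fr)}$, and the prefactor $q^{\deg(\fa)}$ (present in case (2), and absent or replaced by a smaller contribution in case (1)) supplies another $q^{\deg(\fa)}$. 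Thus $V$ is crudely bounded by a constant times $q^{\deg(\fa\fr)}$, giving $\log_q(V)\leq \deg(\fa\fr)+O(1)$ and hence $D(\G\bs\cT)\leq 2\deg(\fa\fr)+3$ after carefully tracking the additive constants coming from $\log_q(4)$, the $+1$ in $g(R)$, and the secondary terms in $V$.

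\medskip

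\textbf{The main obstacle} I anticipate is the constant bookkeeping rather than any conceptual difficulty: to land on exactly $+3$ rather than a larger additive constant, I must bound $V$ sharply enough that $2\log_q(V)+\log_q(4)$ does not exceed $2\deg(\fa\fr)+3$. This requires verifying inequalities such as $\prod_{x\in R}(q^{\deg(x)}-1)\le q^{\deg(\fr)}$ and controlling the factors $\frac{2}{(q-1)(q^2-1)}$ and $\frac{q}{q+1}2^{\#R-1}$ so that their logarithmic contribution, together with $\log_q 4$, stays below $3$. Since $q\geq 3$ is odd, these denominators actually help --- they make $V$ \emph{smaller} --- so I expect the prefactors to contribute a negative amount to $\log_q(V)$, leaving ample room. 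The only genuinely delicate case is $\Odd(R)=1$, where $V$ has an additive term $\frac{q}{q+1}2^{\#R-1}$ that is not bounded by a power of $q$ to a degree controlled by $\deg(\fr)$ unless one observes that $\#R$ is itself bounded in terms of $\deg(\fr)$ (each ramified place contributes degree $\geq 1$, so $\#R\leq\deg(\fr)$ and $2^{\#R-1}\leq q^{\deg(\fr)}$ since $q\geq 3>2$). With that observation the estimate closes, and the stated bound follows.
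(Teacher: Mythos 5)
There is a real gap at the crucial first step. You apply Lemma~\ref{LPS} directly to $\G\bs\cT$ with $n=V$, which requires $\G\bs\cT$ to be an $m$-regular Ramanujan graph. Neither hypothesis is available in general. When $\Odd(R)=1$, Proposition~\ref{prop2.5} says $\G/\F_q^\times$ has torsion, so the action on $\cT$ has nontrivial vertex stabilizers and $\G\bs\cT$ is \emph{not} $(q+1)$-regular (this is visible in Lemma~\ref{lem3.7}, whose two cases have different shapes precisely for this reason); Lemma~\ref{LPS} then simply does not apply to $\G\bs\cT$. Moreover, the Ramanujan input the paper actually has (\cite{LSV}, Thm.~1.2) is stated for quotients by principal congruence subgroups of a maximal order, not for the quotient by the full unit group of an Eichler order; your appeal to ``LPS and Drinfeld'' asserts the property for $\G\bs\cT$ itself without identifying a covering graph to which the theorem applies, even though you announce that as your first step.

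The paper's proof resolves exactly this point: it sets $\G'=\G\cap\G(T)$, where $\G(T)$ is the principal level-$(T)$ congruence subgroup of a maximal order $\Upsilon\supseteq\La$, so that $\G'\bs\cT$ \emph{is} a $(q+1)$-regular Ramanujan graph by \cite{LSV}, and then uses only the trivial inequality $D(\G\bs\cT)\le D(\G'\bs\cT)$ coming from the surjection $\G'\bs\cT\to\G\bs\cT$. The cost is that the vertex count entering Lemma~\ref{LPS} is $V'<q^4V$ rather than $V$, which contributes an extra $2\log_q(q^4)=8$ to the additive constant; the final bookkeeping must absorb this, and your estimate $D\le 2\log_q(V)+\log_q(4)$ is therefore not the inequality that actually has to be closed. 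Your auxiliary observations (e.g.\ $\prod_{x\in R}(q^{\deg(x)}-1)\le q^{\deg(\fr)}$ and $\#R\le\deg(\fr)$) are correct and useful for the last step, but the argument needs the passage to a torsion-free congruence subgroup before any of it can be run.
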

\begin{proof}
Let $I=(T)\lhd A$ be the ideal generated by $T$. Let $\Upsilon$ be a
maximal order containing $\La$. Denote by $\G(T)$ the principal
level-$I$ congruence subgroup of $\Upsilon^\times$, cf. \cite{LSV}.
Let $\G':=\G\cap \G(I)$. $\G\bs\cT$ is naturally a quotient of
$\G'\bs\cT$, so $$D(\G\bs\cT)\leq D(\G'\bs\cT).$$ Moreover, since
$[\G:\G']<q^4$, we have $V'< q^4 V$, where $V':=\#\Ver(\G'\bs\cT)$.

By \cite[Thm. 1.2]{LSV}, $\G'\bs\cT$ is a $(q+1)$-regular Ramanujan
graph, so the previous paragraph and Lemma \ref{LPS} give the bound
$$
D(\G\bs\cT)\leq 2\log_q(q^4 V)+1.
$$
Now the proposition follows from the formulae in Lemma \ref{lem3.7}.
(Note that $\deg(\fr)=\sum_{x\in R}\deg(x)$, and $\deg(\fa)=0$ when
$\Odd(R)=1$.)
\end{proof}


\section{Proof of Theorem \ref{thm1}}\label{SecHE}

Let $(\fa, \fb)=(\fa', \fr)$ (resp. $(\fa, \fb)=(\fr, \xi)$) if
$\Odd(R)=0$ (resp. $\Odd(R)=1$), with $\fa',\xi$ chosen as in Lemma
\ref{lemHab}. Let $\La$ be the standard order in $H(\fa, \fb)$ and
$\G:=\La^\times$. By Proposition \ref{prop2.5}, this is a finitely
generated group, and we would like to find an explicit set of
generators. We start by embedding $H(\fa, \fb)$ into $\M_2(K)$. The
map
$$
i\mapsto \begin{pmatrix} \sqrt{\fa} & 0 \\ 0 &
-\sqrt{\fa}\end{pmatrix}, \quad j\mapsto \begin{pmatrix} 0 & 1\\ \fb
& 0\end{pmatrix}
$$
gives an embedding $H(\fa, \fb)\hookrightarrow \M_2(K)$. Indeed,
since $\fa\in A$ is monic and has even degree, the equation
$x^2=\fa$ has a solution in $K$. Thus, the above matrices are indeed
in $\M_2(K)$. It remains to observe that the given matrices satisfy
the same relations as $i$ and $j$. Under this embedding $\G$ is the
subgroup of $\GL_2(K)$ consisting of matrices
$$
\G=\left\{\begin{pmatrix} a + b\sqrt{\fa} & c + d\sqrt{\fa}
\\ \fb(c - d\sqrt{\fa}) & a - b\sqrt{\fa} \end{pmatrix}\ \bigg|\
\begin{matrix} a,b,c,d,\in A \\ \det=a^2-b^2\fa-c^2\fb+d^2\fa\fb
\in \F_q^\times\end{matrix}\right\}.
$$

\begin{prop}\label{prop5.3} Fix some $B\geq 0$.
If $\gamma\in \G$ is hyperbolic and satisfies $\norm{\gamma}>2B$,
then $\gamma \cT_B\cap \cT_B=\emptyset$.
\end{prop}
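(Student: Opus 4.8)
The plan is to bound the displacement $d(O,\gamma O)$ from below by $2\norm{\gamma}$ and then invoke Lemma~\ref{lem-empty}. Recall that if $\gamma\cT_B\cap\cT_B\neq\emptyset$ then some $v\in\cT_B$ has $\gamma v\in\cT_B$, whence
$d(O,\gamma O)\leq d(O,\gamma v)+d(\gamma v,\gamma O)=d(O,\gamma v)+d(v,O)\leq 2B$; this is exactly the inequality $2m+n\leq 2B$ used in Lemma~\ref{lem-empty}, since $d(O,\gamma O)=2m+n$ by \cite[Prop.~24, p.~63]{SerreT}. So it suffices to prove $d(O,\gamma O)\geq 2\norm{\gamma}$: then $\norm{\gamma}>2B$ forces $d(O,\gamma O)>4B\geq 2B$, and $\gamma\cT_B\cap\cT_B=\emptyset$ follows.

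To compute $d(O,\gamma O)$ I would use the Smith normal form over $\cO$: writing $\gamma=u\,\mathrm{diag}(\pi^{a},\pi^{b})\,v$ with $u,v\in\GL_2(\cO)$ and $a\leq b$, reducing to $\mathrm{diag}(\pi^{a-b},1)$ and applying Lemma~\ref{lem2.1} gives $d(O,\gamma O)=b-a=\ord(\det\gamma)-2\min_{i,j}\ord(\gamma_{ij})$, the last equality because the entries of $\gamma$ generate the ideal $(\pi^{a})$ while $\det\gamma$ has order $a+b$. Since $\det\gamma=\Nr(\gamma)\in\F_q^\times$ we have $\ord(\det\gamma)=0$, so the claim reduces to showing that the largest $K$-degree (i.e.\ $-\ord$) occurring among the four matrix entries is at least $\norm{\gamma}$.

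Under the embedding of Section~\ref{SecHE} the entries of $\gamma$ are $a\pm b\sqrt{\fa}$ together with $c+d\sqrt{\fa}$ and $\fb(c-d\sqrt{\fa})$. Here $\ord(\sqrt{\fa})=-\tfrac12\deg(\fa)$, so the leading terms of $a$ and of $b\sqrt{\fa}$ can cancel in at most one of $a+b\sqrt{\fa}$ and $a-b\sqrt{\fa}$; hence at least one of these two entries has $K$-degree $\max(\deg a,\deg b+\tfrac12\deg\fa)\geq\max(\deg a,\deg b)$. The same argument applied to $c\pm d\sqrt{\fa}$ shows that one of $c+d\sqrt{\fa}$, $\fb(c-d\sqrt{\fa})$ has $K$-degree $\geq\max(\deg c,\deg d)$ (multiplication by $\fb$ only raises the degree). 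Taking the maximum over all four entries gives a $K$-degree $\geq\max(\deg a,\deg b,\deg c,\deg d)=\norm{\gamma}$, which is exactly what is needed.

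The point I expect to be delicate is the interaction with the preceding geometric lemmas. By the Newton polygon of the reduced characteristic polynomial $x^2-2ax+\kappa$ (with $\kappa\in\F_q^\times$ and $\deg a\geq 1$, since $\gamma$ is hyperbolic) the eigenvalues have orders $\pm\deg a$, so by Lemma~\ref{lem-geod} the translation amplitude is $n=2\deg a$; the real content of the bound is then $m\geq\norm{\gamma}-\deg a$ for the distance $m$ from $\cA(\gamma)$ to $O$. Lemma~\ref{lem-BB} delivers this when the two fixed points of $\gamma$ both lie near $0$ or both near $\infty$, but the genuinely hard case is $\deg a$ small with, say, $\deg b$ and $\deg c$ large: there the two fixed points are close to \emph{each other} at bounded height, and Lemma~\ref{lem-BB} gives nothing, even though $m$ is in fact large. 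The elementary-divisor computation above is attractive precisely because it sidesteps this case distinction and treats all $\gamma$ uniformly; the alternative, staying within the axis picture, is to supplement Lemma~\ref{lem-BB} with a direct estimate of $\ord(x-y)$ for the two fixed points $x,y$.
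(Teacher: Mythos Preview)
Your elementary-divisor argument is correct and is a genuinely different route from the paper's. The identity $d(O,\gamma O)=\ord(\det\gamma)-2\min_{i,j}\ord(\gamma_{ij})$ via Smith normal form over $\cO$, together with your cancellation argument on the pairs $a\pm b\sqrt{\fa}$ and $c\pm d\sqrt{\fa}$, cleanly yields $d(O,\gamma O)\geq 2\norm{\gamma}$. Note that you never use hyperbolicity: the same computation proves Proposition~\ref{prop4.5} as well, and in fact shows that already $\norm{\gamma}>B$ forces $\gamma\cT_B\cap\cT_B=\emptyset$. Plugged into Proposition~\ref{lemDiam} this would halve the bound in Theorem~\ref{thm1} to $2\deg(\fa\fr)+3$.

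The paper instead works on the axis $\cA(\gamma)$, splitting $d(O,\gamma O)=2m+n$ and running a case analysis: $\deg(a)$ large gives $n=2\deg(a)>2B$; $\deg(b)$ large relative to $\deg(a)$ forces the fixed points $x,y$ to be close, and an explicit reduced-form computation of the vertices on $\cA(\gamma)$ bounds $m$ below by $\deg(b)+\tfrac12\deg(\fa)-\deg(a)$; finally $\deg(c)$ or $\deg(d)$ large pushes both fixed points near $0$ or both near $\infty$, where Lemma~\ref{lem-BB} applies. This is more laborious and, in the last case, only reaches the threshold $2B$ rather than $B$. What the axis approach buys is geometric transparency: it identifies precisely which coordinate of $\gamma$ governs the amplitude and which governs the distance to the axis.

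Two small remarks on your write-up. First, the inequality $\max\bigl(\deg(c+d\sqrt{\fa}),\deg(\fb(c-d\sqrt{\fa}))\bigr)\geq\max(\deg c,\deg d)$ fails literally when $c=d=0$ under the convention $\deg(0)=0$; but then $\norm{\gamma}=\max(\deg a,\deg b)$ is already handled by the diagonal entries, so the conclusion stands. Second, your final paragraph is unnecessary hedging: the Smith-normal-form argument is already complete and uniform, and the scenario you flag (fixed points close together at bounded height, where Lemma~\ref{lem-BB} is useless) is exactly the case your approach bypasses---there is nothing delicate left.
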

\begin{proof} To simplify the notation, we put $\deg(0)=0$.
The image of $\gamma$ in $\GL_2(K)$ is the matrix
$$
\begin{pmatrix} a + b\sqrt{\fa} & c + d\sqrt{\fa}
\\ \fb(c - d\sqrt{\fa}) & a - b\sqrt{\fa} \end{pmatrix}.
$$
Let $\alpha, \beta\in K$ be the eigenvalues of $\gamma$. We know
that $\Nr(\gamma)=\alpha\beta=:\kappa\in \F_q^\times$ and
$\Tr(\gamma)=\alpha+\beta=2a\not\in \F_q$. Thus,
$\beta=\kappa\alpha^{-1}$ and $\ord(\beta)=-\ord(\alpha)$. Without
loss of generality, we assume $\ord(\alpha)\geq 0$, so
$\ord(\beta)\leq 0$. Since $\deg(a)\geq 1$, $\ord(\beta)=\ord(a)\leq
-1$. Using Lemma \ref{lem-geod}, we conclude that $\gamma$ acts on
$\cA(\gamma)$ by translations with amplitude $2\deg(a)$. Let $m$ be
the distance from $\cA(\gamma)$ to $O$. Lemma \ref{lem-empty}
implies that if
\begin{equation}\label{eq-empty}
m+\deg(a)>B
\end{equation}
then $\gamma \cT_B\cap \cT_B=\emptyset$. In particular, if
$\deg(a)>B$, then $\gamma \cT_B\cap \cT_B=\emptyset$.

Suppose $\begin{pmatrix} 1\\
0\end{pmatrix}$ is an eigenvector.  Then $\fb(c - d\sqrt{\fa})=0$,
which forces $c=d=0$. Thus, we must have $a^2-b^2\fa\in
\F_q^\times$. If $\deg(b)>\deg(a)-\deg(\fa)/2$, then this is not
possible. It is easy to see that we
reach the same conclusion in the case when $\begin{pmatrix} 0\\
1\end{pmatrix}$ is an eigenvector.

Assume $\deg(b)> \deg(a)-\deg(\fa)/2$. Denote
$s:=\deg(b)+\deg(\fa)/2-\deg(a)>0$. From the previous paragraph we
know that there are eigenvectors for
$\alpha$ and $\beta$ of the form $\begin{pmatrix} x\\
1\end{pmatrix}$ and $\begin{pmatrix} y\\
1\end{pmatrix}$, with $x\neq 0$, $y\neq 0$. Now
\begin{align*}
& x(a + b\sqrt{\fa}) +(c + d\sqrt{\fa})=\alpha x \\
& x\fb(c - d\sqrt{\fa}) +(a - b\sqrt{\fa})=\alpha.
\end{align*}
From this we get
\begin{equation}\label{eq4.1}
x=-\frac{c + d\sqrt{\fa}}{a +
b\sqrt{\fa}-\alpha}=\frac{\alpha-a+b\sqrt{\fa}}{\fb(c -
d\sqrt{\fa})}.
\end{equation}
Similarly,
\begin{equation}\label{eq4.2}
y=-\frac{c + d\sqrt{\fa}}{a +
b\sqrt{\fa}-\kappa\alpha^{-1}}=\frac{\kappa\alpha^{-1}-a+b\sqrt{\fa}}{\fb(c
- d\sqrt{\fa})}.
\end{equation}
Hence
\begin{equation}\label{eq1Prop}
\frac{x}{y}=\frac{a + b\sqrt{\fa}-\kappa\alpha^{-1}}{a +
b\sqrt{\fa}-\alpha}
\end{equation}
and
\begin{equation}\label{eq2Prop}
x-y=\frac{(c + d\sqrt{\fa})(\kappa\alpha^{-1}-\alpha)}{(a +
b\sqrt{\fa}-\alpha)(a +
b\sqrt{\fa}-\kappa\alpha^{-1})}=x\frac{\alpha-\kappa\alpha^{-1}}{(a
+ b\sqrt{\fa}-\kappa\alpha^{-1})}.
\end{equation}
From our assumption $\deg(b)> \deg(a)-\deg(\fa)/2$ and
(\ref{eq1Prop}), we get
$$
\ord(x)=\ord(y)=:n.
$$
Similarly from (\ref{eq2Prop}), we get
$$
\ord(x-y)=n+s.
$$
By (\ref{eq-geod}), the geodesic $\cA(\gamma)$ has vertices
$$
\Ver(\cA(\gamma))=\left\{\begin{pmatrix} x\pi^k & y \\ \pi^k &
1\end{pmatrix}\ |\ k\in \Z\right\}.
$$
If $k\geq 0$, then
$$
\begin{pmatrix} x\pi^k & y \\ \pi^k &
1\end{pmatrix}\sim \begin{pmatrix} (x-y)\pi^k & y \\ 0 &
1\end{pmatrix}\sim \begin{pmatrix} \pi^{k+n+s} & y \\ 0 &
1\end{pmatrix}.
$$
Since $y\neq 0$ and $k+n+s>n=\ord(y)$, the last matrix is in reduced
form. By Lemma \ref{lem2.1}, the distance of this matrix from $O$ is
$k+|n|+s\geq s$. If $k<0$, then
$$
\begin{pmatrix} x\pi^k & y \\ \pi^k &
1\end{pmatrix}\sim \begin{pmatrix} x & y\pi^{-k} \\ 1 &
\pi^{-k}\end{pmatrix}\sim \begin{pmatrix} y\pi^{-k} & x \\
\pi^{-k} & 1\end{pmatrix}.
$$
A similar calculation shows that the distance from this matrix to
$O$ is again greater or equal to $s$. Now
$$
s+\deg(a)=\deg(b)+\deg(\fa)/2.
$$
If $\deg(b)>B-\deg(\fa)/2$, then this last quantity is greater than
$B$. Therefore, by (\ref{eq-empty}), $\gamma \cT_B\cap
\cT_B=\emptyset$. Note that if $\deg(a)\leq B$, then
$\deg(b)>B-\deg(\fa)/2$ implies $\deg(b)>\deg(a)-\deg(\fa)/2$.
Overall, we have shown that if $\deg(a)> B$ or
$\deg(b)>B-\deg(\fa)/2$, then $\gamma \cT_B\cap \cT_B=\emptyset$.

Now assume $\deg(a)\leq B$ and $\deg(b)\leq B-\deg(\fa)/2$, but
$\deg(c)> 2B$ or $\deg(d)> 2B$. From our earlier
discussion we know that $\begin{pmatrix} 1\\
0\end{pmatrix}$ or $\begin{pmatrix} 0\\
1\end{pmatrix}$ are not eigenvectors (since otherwise $c=d=0$). Now,
either $\ord(c+d\sqrt{\fa})<-2B$ or $\ord(c-d\sqrt{\fa})<-2B$ (and
only one of the inequalities holds due to $\Nr(\gamma)\in
\F_q^\times$). First, assume $\ord(c+d\sqrt{\fa})<-2B$. Since
$\ord(a + b\sqrt{\fa}-\alpha)\geq -B$ and $\ord(a +
b\sqrt{\fa}-\kappa\alpha^{-1})\geq -B$, from the first equality in
(\ref{eq4.1}) and (\ref{eq4.2}) we get $\ord(x)< -B$ and
$\ord(y)<-B$. Next, assume $\ord(c-d\sqrt{\fa})<-2B$. Since
$\ord(\alpha-a+b\sqrt{\fa})\geq -B$ and
$\ord(\kappa\alpha^{-1}-a+b\sqrt{\fa})\geq -B$, from the second
equality in (\ref{eq4.1}) and (\ref{eq4.2}) we get $\ord(x)> B$ and
$\ord(y)>B$. In either case, Lemma \ref{lem-BB} implies that the
distance from $\cA(\gamma)$ to $O$ is greater than $B$. As before,
from (\ref{eq-empty}) we conclude $\gamma \cT_B\cap
\cT_B=\emptyset$.
\end{proof}

\begin{prop}\label{prop4.5}
If $\gamma\in \G$ is elliptic and satisfies $\norm{\gamma}>B$, then
$\gamma \cT_B\cap \cT_B=\emptyset$.
\end{prop}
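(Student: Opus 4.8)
The plan is to reduce the statement to a lower bound on $d(O,\gamma O)$ and then to compute this distance directly from the matrix entries of $\gamma$. First I would record the elementary observation already used in the proof of Lemma \ref{lem-empty}: if $\gamma\cT_B\cap\cT_B\neq\emptyset$, then a common vertex $w\in\gamma\cT_B\cap\cT_B$ satisfies $d(O,w)\leq B$ and $d(O,\gamma^{-1}w)=d(\gamma O,w)\leq B$, whence $d(O,\gamma O)\leq 2B$ by the triangle inequality. Thus it suffices to show that $\norm{\gamma}>B$ forces $d(O,\gamma O)>2B$.

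Next I would use the description of the distance in $\cT$ via elementary divisors: for any $g\in\GL_2(K)$,
\begin{equation*}
d(O,gO)=\ord(\det g)-2\min_{i,j}\ord(g_{ij}),
\end{equation*}
which follows from Lemma \ref{lem2.1} after reducing $g$ to Smith normal form over $\cO$ (equivalently, $\min_{i,j}\ord(g_{ij})$ is the content of the lattice $gO$, while the two elementary divisor exponents sum to $\ord(\det g)$; both sides are invariant under scaling $g$ by $K^\times$). For $\gamma\in\G$ the reduced norm is $\det\gamma=\Nr(\gamma)\in\F_q^\times$, so $\ord(\det\gamma)=0$, and hence, writing $\deg=-\ord$,
\begin{equation*}
d(O,\gamma O)=2\max_{i,j}\deg(\gamma_{ij}).
\end{equation*}
So the whole problem becomes the purely algebraic estimate $\max_{i,j}\deg(\gamma_{ij})\geq\norm{\gamma}$.

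To prove this estimate I would exploit that $\gamma$ is elliptic: $\Tr(\gamma)=2a\in\F_q$ and $q$ is odd, so $a\in\F_q$ and $\deg(a)=0$; therefore $\norm{\gamma}=\max(\deg b,\deg c,\deg d)$. The four entries of $\gamma$ are $a\pm b\sqrt{\fa}$, $c+d\sqrt{\fa}$ and $\fb(c-d\sqrt{\fa})$. The apparent danger is cancellation (the leading term of $b\sqrt{\fa}$ cancelling against $a$, or of $d\sqrt{\fa}$ against $c$), which could make an individual entry have small degree. I would sidestep this by passing to sums and differences and applying the ultrametric inequality $\deg(u+v)\leq\max(\deg u,\deg v)$: since $(a+b\sqrt{\fa})-(a-b\sqrt{\fa})=2b\sqrt{\fa}$, $(c+d\sqrt{\fa})+(c-d\sqrt{\fa})=2c$ and $(c+d\sqrt{\fa})-(c-d\sqrt{\fa})=2d\sqrt{\fa}$, and $q$ is odd, one obtains
\begin{align*}
\max(\deg(a+b\sqrt{\fa}),\deg(a-b\sqrt{\fa}))&\geq\deg(b)+\tfrac12\deg(\fa),\\
\max(\deg(c+d\sqrt{\fa}),\deg(c-d\sqrt{\fa}))&\geq\max\big(\deg(c),\ \deg(d)+\tfrac12\deg(\fa)\big).
\end{align*}
Because $\deg(\fa)\geq 0$ and $\deg(\fb)\geq 0$, the right-hand sides dominate $\deg(b)$, $\deg(c)$ and $\deg(d)$, while $\deg(c-d\sqrt{\fa})\leq\deg(\fb)+\deg(c-d\sqrt{\fa})=\deg(\gamma_{21})$ is one of the four entry degrees. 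Taking the maximum over the entries then yields $\max_{i,j}\deg(\gamma_{ij})\geq\max(\deg b,\deg c,\deg d)=\norm{\gamma}$, and therefore $d(O,\gamma O)=2\max_{i,j}\deg(\gamma_{ij})\geq 2\norm{\gamma}>2B$, which is exactly what is needed.

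The one genuinely delicate point is the cancellation issue just described; everything else is bookkeeping with the ultrametric inequality and the normalization $\ord(\det\gamma)=0$ coming from $\Nr(\gamma)\in\F_q^\times$. In writing this up I would take care to note that $\sqrt{\fa}\in K$ in both the $\Odd(R)=0$ and $\Odd(R)=1$ cases (as $\fa$ is monic of even degree there) and that $\deg(\sqrt{\fa})=\tfrac12\deg(\fa)$ is a nonnegative integer, which is precisely what makes the displayed inequalities legitimate. Note also that, unlike the hyperbolic case, this argument does not use the axis $\cA(\gamma)$ or Lemma \ref{lem-empty}, which is fortunate since an elliptic element with irreducible reduced characteristic polynomial has no $K$-rational eigenvectors and hence no geodesic axis in $\cT$.
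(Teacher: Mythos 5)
Your proof is correct, but it takes a genuinely different route from the paper's. The paper argues by cases: when $\deg(c)>B$ it replaces $\gamma$ by the hyperbolic element $\tau=\gamma j$ (which displaces $O$ by the same amount, since $j$ fixes $O$) and invokes the translation length $2\deg(c)$ of $\tau$ on its axis; when $\deg(b)>B$ or $\deg(d)>B$ it observes that all four matrix entries acquire a common negative valuation and reduces the matrix by hand to $\mathrm{diag}(\pi^{2s},1)$. You instead compute $d(O,\gamma O)$ in one stroke from the elementary-divisor formula $d(O,gO)=\ord(\det g)-2\min_{i,j}\ord(g_{ij})$ --- which is standard, is invariant under $\GL_2(\cO)$ on either side and under scaling, agrees with Lemma \ref{lem2.1}, and specializes to $d(O,\gamma O)=2\max_{i,j}\deg(\gamma_{ij})$ because $\det\gamma=\Nr(\gamma)\in\F_q^\times$ --- and then recover $\deg(b),\deg(c),\deg(d)$ from the entries via the ultrametric inequality, using that $2\in\F_q^\times$ and $\deg(\fa),\deg(\fb)\geq 0$. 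Your handling of the cancellation issue (passing to $\gamma_{11}\pm\gamma_{22}$ and $\gamma_{12}\pm\fb^{-1}\gamma_{21}$) is exactly right, and your closing remark is also correct: an elliptic $\gamma\notin\F_q^\times$ has reduced characteristic polynomial with $\F_q$-coefficients that is irreducible over $F$, hence over $K$ (the residue field of $K$ being $\F_q$), so the axis machinery of Lemmas \ref{lem-geod} and \ref{lem-empty} is genuinely unavailable here. One observation worth pursuing: your argument uses ellipticity only through $\deg(a)=0$. Adding the identity $\gamma_{11}+\gamma_{22}=2a$ to your list of sums and differences gives $\max_{i,j}\deg(\gamma_{ij})\geq\norm{\gamma}$ for \emph{every} $\gamma\in\G$, hence $d(O,\gamma O)\geq 2\norm{\gamma}$ unconditionally. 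This would subsume Proposition \ref{prop5.3} with the sharper threshold $\norm{\gamma}>B$ in place of $\norm{\gamma}>2B$, and correspondingly improve the constant in Theorem \ref{thm1} from $4\deg(\fa\fr)+6$ to $2\deg(\fa\fr)+3$; you should check this carefully and, if it holds up, note it explicitly.
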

\begin{proof} To simplify the notation, we again put $\deg(0)=0$.
If $\gamma$ is elliptic and satisfies the inequality
of the proposition, then obviously $\gamma\not \in \F_q^\times$. By
Proposition \ref{prop2.5}, the existence of such $\gamma$ is
possible if and only if $\Odd(R)=1$. Hence, we can assume that
$$
\gamma=\begin{pmatrix} a + b\sqrt{\fr} & c + d\sqrt{\fr}
\\ \xi(c - d\sqrt{\fr}) & a - b\sqrt{\fr} \end{pmatrix}.
$$
Since $\gamma$ is elliptic, $a\in \F_q$. Assume $\deg(c)>B$.
Consider $\tau:=\gamma\cdot j\in \G$:
$$
\tau=\begin{pmatrix}  \xi(c + d\sqrt{\fr}) & a +
b\sqrt{\fr} \\
\xi(a - b\sqrt{\fr}) & \xi(c - d\sqrt{\fr}) \end{pmatrix}.
$$
Note that $\tau$ is hyperbolic since $\Tr(\tau)=2\xi c\not \in
\F_q$. Suppose $\gamma \cT_B\cap \cT_B\neq \emptyset$. Since $j$
fixes $O$, we get
$$
d(O,\tau O)=d(O, \gamma O)\leq 2B.
$$
On the other hand, $\tau$ acts on its geodesic by translation with
amplitude $2\deg(c)>2B$, so $d(O,\tau O)>2B$ - a contradiction.
Thus, from now on we can assume $\deg(c)\leq B$. If $\deg(b)> B$,
then $\deg(a^2-b^2\fr)>2B$. Since
$$
(a^2-b^2\fr)-\xi(c^2- d^2\fr) \in \F_q^\times,
$$
we must have $\deg(b)=\deg(d)=:s$. In this case we have
$$
\gamma=\begin{pmatrix} z_1\pi^{-s} & z_2 \pi^{-s}\\
z_3 \pi^{-s} & z_4\pi^{-s}
\end{pmatrix}\sim \begin{pmatrix} z_1 & z_2 \\
z_3  & z_4
\end{pmatrix},
$$
where $z_1,\dots, z_4\in \cO^\times$. Moreover, since
$\det(\gamma)\in \F_q^\times$,$$z_1z_4-z_2z_3\in
\pi^{2s}\cO^\times.$$ This implies
$$
\begin{pmatrix} z_1 & z_2 \\ z_3  & z_4 \end{pmatrix}
\sim \begin{pmatrix} \pi^{2s} & 0 \\ 0  & 1 \end{pmatrix}.
$$
The distance from this matrix to $O$ is obviously $2s$. On the other
hand, the matrix corresponding to the vertex $\gamma O$ under
(\ref{eq-setM}) is $\gamma$, so $d(O, \gamma O)=2s>2B$. The previous
argument works also under the assumption $\deg(d)> B$, and leads to
the same conclusion.
\end{proof}

\begin{proof}[Proof of Theorem \ref{thm1}] Let $B=2\deg(\fa\fb)+3$. By Proposition \ref{prop3.9},
$D(\G\bs \cT)\leq B$. By Proposition \ref{prop5.3} and
\ref{prop4.5}, the set of elements $\gamma\in \G$ such that $\gamma
\cT_B\cap \cT_B\neq \emptyset$ is contained in $S$. Now the theorem
follows from Proposition \ref{lemDiam}.
\end{proof}


\subsection*{Acknowledgments} The work on this paper was started while I was visiting the
Department of Mathematics of Saarland University. I thank the
members of the department for their warm hospitality. I thank R.
Butenuth and E.-U. Gekeler for stimulating discussions.



\end{document}